\theoremstyle{plain}
\newtheorem{thm}{Theorem}[section]
\newtheorem{prop}{Proposition}[section]
\theoremstyle{definition}
\newcommand{\FF}{\mathbb{F}}
\newcommand{\ZZ}{\mathbb{Z}}
\newcommand{\C}{\mathcal{C}}
\newcommand{\JE}{\mathcal{J}}
\newcommand{\Jav}{\mathcal{J}^{av}}
\newcommand{\SSn}{\mathcal{S}_n}
\newcommand{\I}{{\rm{I}}}
\newcommand{\II}{{\rm{I\hspace{-.01em}I}}}
\newcommand{\III}{{\rm{I\hspace{-.01em}I\hspace{-.01em}I}}}
\newcommand{\IV}{{\rm{I\hspace{-.01em}V}}}
\DeclareMathOperator{\comp}{comp}
\begin{document}

\title[Average Complete Joint Weight Enumerators]{Average of Complete Joint Weight Enumerators and Self-dual Codes}

\author[Chakraborty]{Himadri Shekhar Chakraborty*}
\thanks{*Corresponding author}
\address
	{
		(1) Graduate School of Natural Science and Technology\\
		Kanazawa University\\  
		Ishikawa 920-1192, Japan\\
		(2) Department of Mathematics, Shahjalal University of Science and Technology\\ Sylhet-3114, Bangladesh, 
	}
\email{himadri-mat@sust.edu}

\author[Miezaki]{Tsuyoshi Miezaki}
\address
	{
		Faculty of Education, University of the Ryukyus\\ Okinawa  
		903-0213, Japan\\ 
	}
\email{miezaki@edu.u-ryukyu.ac.jp} 

\date{\today}

\begin{abstract}
	In this paper, 
	we give a representation of the average of complete joint weight enumerators 
	of two linear codes of length~$n$ over $\FF_q$ and $\ZZ_k$ 
	in terms of the compositions of $n$ and their distributions in the codes. 
	We also obtain a generalization of the representation for the average of $g$-fold complete joint weight enumerators of codes over $\FF_q$ and $\ZZ_{k}$. Finally, the average of intersection numbers of a pair of Type~$\III$ (resp. Type~$\IV$) codes, and its second moment are found. 
\end{abstract}

\subjclass[2010]{Primary: 11T71; Secondary: 94B05, 11F11}
\keywords{Codes, weight enumerators, equivalence.}

\maketitle

\section{Introduction}\label{SecIntroduction}

The notion of the joint weight enumerator of two $\FF_q$-linear codes was introduced in~\cite{MMC1972}. Further, the notion of the $g$-fold complete joint weight enumerator of~$g$ linear codes over $\FF_q$ was given in~\cite{SC2000}. 
In recent few years, 
there has been interest in studying linear codes over the finite rings $\ZZ_{k}$ of 
integers modulo $k$ ($k \geq 2$). 
The concept of the~$g$-fold joint weight enumerator and the~$g$-fold multi-weight enumerator of codes over $\ZZ_{k}$ was 
investigated in~\cite{DHO}. 
Furthermore, the average of joint weight enumerators of two binary codes was investigated in~\cite{Y1989} using the ordinary weight distributions of the codes, 
and also their average intersection number was obtained.
Inspired by the relation pointed out in~\cite{MO2019}
between the complete cycle index and the complete weight enumerator, 
an analogue of the relation between
the average complete joint cycle index and 
the average complete joint weight enumerator of codes 
was given in~\cite{CMOxxxx}. 
Consecutively,
the concept of the average complete joint Jacobi polynomial of codes over $\FF_q$ and $\ZZ_{k}$ was defined in~\cite{CMxxx}, and an analogue of the main theorem in~\cite{Y1989} was given.  
In this paper, 
we define the average complete joint weight enumerator of two linear codes 
over~$\FF_q$ or $\ZZ_k$, and give a generalization of the main theorem in~\cite{Y1989}
for it. 
Moreover,
we extend the idea of the average complete joint weight enumerator 
to the average of~$g$-fold complete joint weight enumerators 
of linear codes over $\FF_q$ or $\ZZ_{k}$. 
We take the average on all permutationally (not monomially) 
equivalent linear codes over~$\FF_q$ or $\ZZ_k$. 
In~\cite{Y1991}, 
the average of intersection numbers of a pair of 
Type~$\I$ or Type~$\II$ codes over~$\FF_2$ 
and their second moments were given. 
As a part of our study, 
we give the average of intersection numbers 
and its second moment of a pair of 
Type~$\III$ codes over~$\FF_3$ as well as Type~$\IV$ codes over~$\FF_4$. 

Throughout this paper, we assume that~$R$ denotes either the finite field $\FF_q$ of order $q$, where $q$ is a prime power or the ring $\ZZ_{k}$ of integers modulo $k$ for some integer $k\geq 2$. 
Let $u = (u_1,u_2,\dots, u_n)$ and $v = (v_1,v_2,\dots,v_n)$ be the elements of $R^n$. 
Then the \emph{inner product} 
of~ $u,v \in R^n$ is given by
\[
	u \cdot v 
	:= 
	u_1v_1 + u_2v_2 + \dots + u_nv_n.
\]
If $u \cdot v = 0$, 
we call $u$ and $v$ \emph{orthogonal}. 
An element
$u \in R^n$ is called \emph{self-orthogonal} if 
$u \cdot u = 0$. 
When we consider a vector space over $\FF_4$, 
it is convenient to consider another inner product given by 
$u \cdot v := u_1\bar{v}_1 + u_2\bar{v}_2 + \dots + u_n\bar{v}_n$, 
where $\bar{a}=a^2$ for $a \in \FF_4$.

An $\FF_q$-linear code of length~$n$ is a vector subspace of 
$\FF_q^n$, 
and a $\ZZ_k$-linear code of length~$n$ is an additive group of $\ZZ_{k}^n$. 
Let $C$ be an $R$-linear code of length~$n$. 
The elements of~$C$ are called \emph{codewords}.
The \emph{dual code} of~$C$ is defined as
\[
	C^\perp 
	:= 
	\{
		v\in R^{n} 
		\mid 
		u \cdot v = 0 
		\text{ for all } 
		u\in C
	\}. 
\]
If $C \subseteq C^\perp$, then $C$ is called \emph{self-orthogonal}, 
and if $C = C^\perp$, then $C$ is called \emph{self-dual}. 
Clearly, if $C$ is self-dual, every codeword $u \in C$ is self-orthogonal.

It is well known that 
the length~$n$ of a self-dual code over~$\FF_q$ is even 
and the dimension is $n/2$. 
A self-dual code~$C$ 
over~$\FF_2$ or~$\FF_4$ 
of length $n\equiv 0 \pmod 2$ 
having even weight 
is called \emph{Type~$\I$} or \emph{Type~$\IV$}, respectively. 
A self-dual code $C$ over~$\FF_2$ 
of length $n\equiv 0\pmod 8$ 
is called \emph{Type~$\II$}
if the weight of each codeword of~$C$ 
is a multiple of~$4$. 
Finally, 
a self-dual code $C$ over~$\FF_3$ 
of length $n\equiv 0\pmod 4$ 
is called \emph{Type~$\III$}
if the weight of each codeword of~$C$ 
is a multiple of~$3$. 

Let the elements of 
$R$ be~$0=\omega_0, \omega_1, \dots, \omega_{|R|-1}$ 
in some fixed order.
Then the \emph{composition} of an element $u \in R^{n}$ is defined as
\[
	\comp(u) 
	:= 
	s(u) 
	:= 
	(s_0(u), s_1(u), \dots, s_{|R|-1}(u)),
\]
where $s_i(u)$ denotes the number of coordinates of $u$ that are equal to $\omega_i$. Obviously
\[
	\sum_{i=0}^{|R|-1} s_i(u) = n.
\]

In general, a \emph{composition} $s$ of $n$ is a vector $s = (s_0,s_1, \dots, s_{|R|-1})$ with non-negative integer components such that
\[
	\sum_{i=0}^{|R|-1} s_i = n.
\]

Let $C$ be an $R$-linear code of length~$n$. 
We denote by $T_s^{C}$ the set of codewords of $C$ with composition $s$ and 
by~$A_s^{C} := |T_s^{C}|$, 
that is, the number of codewords $u \in C$ 
such that~$\comp(u) = s$. Then the \emph{complete weight enumerator} of $C$ is defined as:
\begin{align*}
	\C_{C}(x_0, \dots, x_{|R|-1})
	& :=
	\sum_{u \in C} 
	x_0^{s_0(u)} \dots x_{|R|-1}^{s_{|R|-1}(u)}\\
	& = 
	\sum_{s}
	 A_s^C x_0^{s_0} \dots x_{|R|-1}^{s_{|R|-1}},
\end{align*}
where $x_0, \dots, x_{|R|-1}$ are indeterminates and the sum extends over all compositions $s$ of $n$. Let $K := \{0,1,\dots,|R|-1\}$. Then the complete weight enumerator of $C$ can be written as:
\[
	\C_{C}(x_{i} \text{ with } i \in K) 
	= 
	\sum_{s} A_{s}^C \prod_{i=0}^{|R|-1} x_{i}^{s_i}.
\]

Now let $C$ and $D$ be two $R$-linear codes of length $n$. 
We denote by $\eta(u,v)$ the \emph{bi-composition} of the pair 
$(u,v)$ for $u,v \in R^{n}$
which is a vector with non-negative integer components 
$\eta_{\alpha\beta}(u,v)$ 
defined as
\[
	\eta_{\alpha\beta}(u,v) 
	:= 
	\# \{i \mid (u_i,v_i) = (\alpha,\beta)\},
\]
where $(\alpha,\beta) \in R^2$. Clearly
\[
	\sum_{\alpha,\beta \in R} 
	\eta_{\alpha\beta}(u,v) 
	= n. 
\]
In general, a bi-composition $\eta$ of $n$ is a vector with non-negative integer components $\eta_{\alpha\beta}$ such that
\[
	\sum_{\alpha,\beta \in R} 
	\eta_{\alpha\beta} 
	= n. 
\]
The \emph{complete joint weight enumerator} of $C$ and $D$ is defined as
\begin{align*}
	\C\JE_{C,D}(x_{ij} \text{ with } i,j \in K)
	& :=
	\sum_{u \in C, v \in D}
	\prod_{i,j=0}^{|R|-1}
	x_{ij}^{\eta_{\omega_i\omega_j}(u,v)}\\
	& = 
	\sum_{\eta} 
	A_{\eta}^{C,D} 
	\prod_{i,j = 0}^{|R|-1} x_{ij}^{\eta_{\omega_i\omega_j}},
\end{align*}
where $x_{ij}$ for $i,j \in K$ are the indeterminates and $A_{\eta}^{C,D}$ is the number of pair $(u,v)\in C \times D$ such that $\eta(u,v) = \eta$.

We write $\SSn$ for the symmetric group acting on the set $\{1,2,\dots,n\}$, equipped with the composition of permutations. For any $R$-linear code~$C$, the code $C^\sigma:= \{u^{\sigma} \mid u\in C\}$ for some permutation~$\sigma \in \SSn$ is called \emph{permutationally equivalent} to $C$, where $u^{\sigma} := (u_{\sigma(1)},\dots, u_{\sigma(n)})$. Then the \emph{average complete joint weight enumerator} of $R$-linear codes~$C$ and $D$ is defined as
\[
	\C\Jav_{C,D}(x_{ij} \text{ with } i,j \in K) 
	:= 
	\dfrac{1}{n!}\sum_{\sigma \in \SSn} \C\JE_{C^\sigma,D}(x_{ij}).
\]

This paper is organized as follows. 
In Section~\ref{SecMacWilliams}, 
we give the average version of the MacWilliams identity 
for the complete joint weight enumerators. 
In Sections~\ref{SecProofMainTheorem}, 
we give the main result (Theorem~\ref{ThMainTheorem}) of this paper, 
and in Section~\ref{Secgfold}, 
we obtain a generalization of Theorem~\ref{ThMainTheorem} 
for the average of $g$-fold complete joint weight enumerators of codes 
over~$\FF_q$ and~$\ZZ_k$ (Theorem \ref{thm:gen}). 
In Section~\ref{SecAvInterNum}, 
the average of intersection numbers of a pair of 
Type~$\III$ (resp. Type~$\IV$) codes, 
and its second moment are given
(Theorem~\ref{thm:III} and Theorem~\ref{thm:IV}).

\section{MacWilliams Identity}\label{SecMacWilliams}

The MacWilliams identity 
for $g$-fold complete joint weight enumerators of codes over~$\FF_q$ 
was established in~\cite{SC2000}. 
Further, in~\cite{DHO}, 
the MacWilliams identity 
for $g$-fold joint weight enumerators of codes over~$\ZZ_{k}$ was given. 
In this section, 
we study the MacWilliams type identity 
for the average complete joint enumerators over~$R$. 
At the beginning of this section 
we recall~\cite{DHO,MMC1972} 
to take some fixed character over $R$. 

A \emph{character} $\chi$ of $R$ is a homomorphism from the additive group~$R$ to the multiplicative group of non-zero complex numbers. 

Let $R = \FF_q$, where $q=p^f$ for some prime number $p$. 
Again let $F(x)$ be a primitive irreducible polynomial of degree $f$ over $\FF_p$ 
and let $\lambda$ be a root of $F(x)$. 
Then any element $\alpha \in \FF_q$ has a unique representation as:
\begin{equation}\label{EquAlphaRep}
	\alpha 
	= 
	\alpha_0 
	+ 
	\alpha_1 \lambda 
	+ 
	\alpha_2 \lambda^2 
	+ 
	\dots 
	+ 
	\alpha_{f-1} \lambda^{f-1},
\end{equation}
where $\alpha_i \in \FF_p$, 
and $\chi(\alpha) := \zeta_p^{\alpha_0}$, 
where $\zeta_p$ is the primitive $p$-th root~$e^{2{\pi}i/p}$ of unity, 
and $\alpha_0$ is given by (\ref{EquAlphaRep}).

Again if $R = \ZZ_k$, 
then for $\alpha \in \ZZ_{k}$ 
we defined $\chi$ as $\chi(\alpha) := \zeta_k^{\alpha}$, 
where $\zeta_k$ is the primitive $k$-th root~$e^{2{\pi}i/k}$ of unity. 

We have the MacWilliams identity 
for the complete weight enumerator 
of a code
 $C$ over $R$ as follows. 

\begin{thm}[\cite{DHO, MMC1972}]
	For a code $C$ over $R$ we have
	\[
		\C_{C^\perp}(x_i \text{ with } i \in K) = \dfrac{1}{|C|} T_{R} \cdot \C_C(x_i),
	\]
	where $T_{R} = \left(\chi(\alpha\beta)\right)_{\alpha,\beta \in R}$.
\end{thm}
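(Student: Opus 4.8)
The plan is to prove the identity by a direct double-counting argument on the pair $(C, C^\perp)$, using the orthogonality relations for the fixed character $\chi$ of $R$. First I would write the left-hand side as $\C_{C^\perp}(x_i) = \sum_{v \in C^\perp} \prod_{\beta \in R} x_\beta^{s_\beta(v)}$, where for notational convenience I index the indeterminates directly by elements of $R$ rather than by $K$. The goal is to replace the sum over $v \in C^\perp$, which is an "external" constraint, by a sum over \emph{all} of $R^n$ together with a character sum that detects membership in $C^\perp$. The key elementary fact is the orthogonality relation: for $w \in R^n$,
\[
	\sum_{u \in C} \chi(u \cdot w) =
	\begin{cases}
		|C| & \text{if } w \in C^\perp,\\
		0 & \text{otherwise},
	\end{cases}
\]
which follows because $u \mapsto \chi(u \cdot w)$ is a character of the finite abelian group $C$, trivial exactly when $w \in C^\perp$ (here one uses that $\chi$ is a nontrivial character so that the pairing $R \times R \to \CC^\times$, $(\alpha,\beta)\mapsto \chi(\alpha\beta)$, is nondegenerate — this holds for both $\FF_q$ and $\ZZ_k$ with the characters fixed above).

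Next I would substitute this relation to obtain
\[
	\C_{C^\perp}(x_i)
	= \frac{1}{|C|} \sum_{v \in R^n} \left(\sum_{u \in C} \chi(u\cdot v)\right) \prod_{\beta \in R} x_\beta^{s_\beta(v)}
	= \frac{1}{|C|} \sum_{u \in C} \sum_{v \in R^n} \chi(u \cdot v) \prod_{\beta \in R} x_\beta^{s_\beta(v)}.
\]
Now the inner sum over $v \in R^n$ factors coordinatewise: writing $u = (u_1,\dots,u_n)$ and $v = (v_1,\dots,v_n)$, and noting $\chi(u\cdot v) = \prod_{t=1}^n \chi(u_t v_t)$ while $\prod_\beta x_\beta^{s_\beta(v)} = \prod_{t=1}^n x_{v_t}$, we get
\[
	\sum_{v \in R^n} \chi(u\cdot v)\prod_\beta x_\beta^{s_\beta(v)}
	= \prod_{t=1}^n \left(\sum_{\beta \in R} \chi(u_t \beta)\, x_\beta\right)
	= \prod_{\alpha \in R} \left(\sum_{\beta \in R} \chi(\alpha\beta)\, x_\beta\right)^{s_\alpha(u)}.
\]
Substituting back, $\C_{C^\perp}(x_i) = \frac{1}{|C|}\sum_{u \in C} \prod_{\alpha\in R}\bigl(\sum_{\beta\in R}\chi(\alpha\beta)x_\beta\bigr)^{s_\alpha(u)}$, which is precisely $\frac{1}{|C|}$ applied to $\C_C$ evaluated at the new variables $x_\alpha \mapsto \sum_{\beta} \chi(\alpha\beta) x_\beta$; this substitution is exactly the linear action of the matrix $T_R = (\chi(\alpha\beta))_{\alpha,\beta\in R}$ on the vector of indeterminates, giving the claimed formula $\C_{C^\perp} = \frac{1}{|C|} T_R \cdot \C_C$.

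I expect the only genuine subtlety to be verifying the orthogonality relation in the $\ZZ_k$ case, where $R$ is merely a ring: one must check that the additive-group pairing induced by multiplication is nondegenerate, i.e. that $\chi(\alpha\beta) = 1$ for all $\beta \in \ZZ_k$ forces $\alpha = 0$, which is immediate since $\chi(\alpha \cdot 1) = \zeta_k^\alpha = 1$ iff $\alpha \equiv 0 \pmod k$. For $\FF_q$ the analogous nondegeneracy follows from the fact that $\chi$ is the composition of the absolute trace with an additive character of $\FF_p$, hence nontrivial, and the trace form is nondegenerate. Everything else is the coordinatewise factorization, which is routine. Since the statement is quoted from \cite{DHO, MMC1972}, in the paper itself this proof would either be omitted or compressed to the three displayed lines above.
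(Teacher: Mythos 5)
Your proof is correct, and it matches the paper's method: the paper itself leaves this cited theorem unproved, but its proof of the joint MacWilliams identity (Theorem~\ref{ThMacWilliams}) uses exactly your argument --- detecting membership in the dual via the character sum $\delta_{C^\perp}(v)=\frac{1}{|C|}\sum_{u\in C}\chi(u\cdot v)$, factoring the sum over $R^n$ coordinatewise, and reading off the variable substitution $x_\alpha\mapsto\sum_\beta\chi(\alpha\beta)x_\beta$ as the action of $T_R$. Your verification of the orthogonality relation (nondegeneracy of the pairing in both the $\FF_q$ and $\ZZ_k$ cases) is in fact more careful than the paper, which states that identity without justification.
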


For a code $C$ over $R$ 
let $\tilde{C}$ be either $C$ or $C^\perp$. Then we define
\[
	\delta(C,\tilde{C}) 
	:= 
	\begin{cases}
		0 & \mbox{if} \quad \tilde{C} = C, \\
		1 & \mbox{if} \quad \tilde{C} = C^\perp.  
	\end{cases}
\]
For $\alpha \in R$, 
we define 
$\gamma(\alpha) := j$ if $\alpha = \omega_j$. 
Though it is obvious for the experts, 
we give a proof of the following Theorem for general readers.

\begin{thm}[MacWilliams Identity]\label{ThMacWilliams}
	Let $C$ and $D$ be two $R$-linear codes of length $n$. 
	Then we have
	\[
		\C\JE_{\tilde{C},\tilde{D}}
		(x_{ij} \text{ with } i,j \in K) 
		= 
		\dfrac{1}{|C|^{\delta(C,\tilde{C})}|D|^{\delta(D,\tilde{D})}} T_{R}^{\delta(C,\tilde{C})} 
		\otimes 
		T_{R}^{\delta(D,\tilde{D})} 
		\C\JE_{C,D}(x_{ij}).
	\]
\end{thm}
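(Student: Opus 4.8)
The plan is to reduce the joint statement to an iterated application of the single-variable MacWilliams identity recalled above, exploiting the tensor-product structure of the complete joint weight enumerator. First I would treat the enumerator $\C\JE_{C,D}$ as a bilinear gadget: writing each codeword pair's contribution as $\prod_{i,j} x_{ij}^{\eta_{\omega_i\omega_j}(u,v)}$, observe that if we freeze $v \in D$ and set $y_i := \prod_{j} x_{i\,\gamma(v\text{-entry})}$-type monomials, then $\sum_{u\in C}\prod_{i,j}x_{ij}^{\eta_{\omega_i\omega_j}(u,v)}$ is literally the complete weight enumerator of $C$ evaluated at indeterminates that depend (coordinatewise) on $v$. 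The essential combinatorial fact is that the bi-composition $\eta(u,v)$ records, for each coordinate $\ell$, the pair $(u_\ell,v_\ell)$, so summing over $u\in C$ with $v$ fixed factors through $\comp$ of $u$ relative to the "pattern" imposed by $v$. Concretely I would prove the two one-sided statements first: (a) $\C\JE_{C^\perp,D} = \frac{1}{|C|}(T_R\otimes I)\,\C\JE_{C,D}$, and symmetrically (b) $\C\JE_{C,D^\perp} = \frac{1}{|D|}(I\otimes T_R)\,\C\JE_{C,D}$; the general four-case statement then follows by composing (a) and (b), since the operators $T_R\otimes I$ and $I\otimes T_R$ commute and $(T_R\otimes I)(I\otimes T_R) = T_R\otimes T_R$, while the normalizing factors and the exponents $\delta(C,\tilde C),\delta(D,\tilde D)$ multiply accordingly.

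For step (a), I would run the standard MacWilliams character-sum argument in the first slot only, keeping $D$ as a spectator. Fix $v\in D$. For $w\in C^\perp$ one has, by the dual-code indicator identity, $[w\in C^\perp] = \frac{1}{|C|}\sum_{u\in C}\chi(u\cdot w)$, and $\chi(u\cdot w) = \prod_{\ell=1}^{n}\chi(u_\ell w_\ell)$. Substituting into $\sum_{w\in R^n}[w\in C^\perp]\prod_{\ell}x_{\gamma(w_\ell)\,\gamma(v_\ell)}$ and interchanging sums gives $\frac{1}{|C|}\sum_{u\in C}\sum_{w\in R^n}\prod_{\ell}\chi(u_\ell w_\ell)x_{\gamma(w_\ell)\,\gamma(v_\ell)}$, and the inner sum over $w\in R^n$ factors as $\prod_{\ell}\big(\sum_{\beta\in R}\chi(u_\ell\beta)x_{\gamma(\beta)\,\gamma(v_\ell)}\big)$. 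Recognizing $\sum_{\beta}\chi(\alpha\beta)x_{\gamma(\beta)\,j} = \big(T_R \cdot (x_{0j},\dots,x_{|R|-1,j})^{\mathsf T}\big)_{\gamma(\alpha)}$ shows that, after the substitution $x_{ij}\mapsto (T_R\otimes I)x_{ij} := \sum_{i'}\chi(\omega_i\omega_{i'})x_{i'j}$, the resulting expression is exactly $\C\JE_{C,D}$ with $v$ still fixed; summing over $v\in D$ finishes (a). Step (b) is identical with the roles swapped, acting on the second index and using $T_R$ on the $j$-slot; here for $\FF_4$ one uses the Hermitian inner product and the fact that $\alpha\mapsto\bar\alpha$ is an automorphism so that $T_R$ is still the correct kernel.

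The main obstacle — really the only place care is needed — is bookkeeping the tensor notation: making precise that "$T_R^{\delta(C,\tilde C)}\otimes T_R^{\delta(D,\tilde D)}$ applied to $\C\JE_{C,D}(x_{ij})$" means the linear substitution $x_{ij}\mapsto\sum_{i',j'}(T_R^{\delta(C,\tilde C)})_{i i'}(T_R^{\delta(D,\tilde D)})_{j j'}x_{i'j'}$, and that this substitution respects composition so that doing the first slot then the second slot agrees with doing both at once. Once that is pinned down, the four cases $(\tilde C,\tilde D)\in\{(C,D),(C^\perp,D),(C,D^\perp),(C^\perp,D^\perp)\}$ are handled uniformly: when $\delta=0$ the corresponding $T_R^0 = I$ contributes nothing and the normalizer $|C|^0=1$, and when $\delta=1$ we get one factor of $T_R$ and one division by the code size, exactly as in steps (a) and (b). I would also remark that this is precisely the $g=2$ instance of the MacWilliams identity for $g$-fold complete joint weight enumerators from \cite{SC2000, DHO}, so the proof is included only for completeness.
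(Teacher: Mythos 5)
Your proposal is correct and follows essentially the same route as the paper: the dual-code indicator $\delta_{D^\perp}(v)=\frac{1}{|D|}\sum_{d\in D}\chi(d\cdot v)$ (resp.\ its $C$-slot analogue), interchange of summation, coordinatewise factorization of the character sum, and then reduction of the four cases $(\tilde{C},\tilde{D})$ to the one-sided substitutions, which is exactly the ``it is sufficient to show $|D|\,\C\JE_{C,D^\perp}=(I\otimes T_R)\,\C\JE_{C,D}$'' step in the paper's proof. Your additional remarks (explicit commutation of $T_R\otimes I$ and $I\otimes T_R$, and the Hermitian inner product for $\FF_4$) only spell out bookkeeping the paper leaves implicit.
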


\begin{proof}
	It is sufficient to show
	\[
		|D|\C\JE_{{C},{D}^\perp}(x_{ij}) 
		= 
		(I \otimes T_{R}) \C\JE_{C,D}(x_{ij}),
	\]
	where, $\tilde{C} = C$, $\tilde{D} = D^\perp$, 
	and 
	$I$ is the identity matrix. 
	Let
	\[
	\delta_{D^\perp}(v) := 
		\begin{cases}
			1 & \mbox{if} \quad v \in D^\perp, \\
			0 & \mbox{otherwise}.
		\end{cases} 
	\]
	Then we have the following identity
	\[
		\delta_{D^\perp}(v) 
		= 
		\dfrac{1}{|D|} \sum_{d \in D} \chi(d \cdot v).
	\]
	Now
	\begin{align*}
		|D|\C\JE_{{C},{D}^\perp}(x_{ij}) 
			& = |D| \sum_{c \in C} \sum_{d^\prime \in {D}^\perp} \prod_{i,j=0}^{|R|-1} x_{ij}^{\eta_{\omega_i\omega_j}(c,d^\prime)} \\
			& = |D| \sum_{c \in C} \sum_{v \in R^n} \delta_{D^\perp}(v) \prod_{i,j=0}^{|R|-1} x_{ij}^{\eta_{\omega_i\omega_j}(c,v)} \\
			& = \sum_{c \in C} \sum_{v \in R^n} \sum_{d \in D} \chi(d \cdot v) \prod_{i,j=0}^{|R|-1} x_{ij}^{\eta_{\omega_i\omega_j}(c,v)} \\
			& = \sum_{c \in C, d \in D} \sum_{v \in R^n} \chi(d \cdot v) \prod_{i,j=0}^{|R|-1} x_{ij}^{\eta_{\omega_i\omega_j}(c,v)} \\
			& = 
			\sum_{c \in C, d \in D} 
			\sum_{(v_1,\dots,v_n) \in R^n} 
			\chi(d_1v_1 + \dots + d_nv_n) 
			\prod_{1\leq i \leq n} 
			x_{\gamma(c_i) \gamma(v_i)} \\
			& = 
			\sum_{c \in C, d \in D} 
			\prod_{1\leq i \leq n} 
			\sum_{v_i \in R} 
			\chi(d_iv_i) 
			x_{\gamma(c_i) \gamma(v_i)} \\
			& = \sum_{c \in C, d \in D} \prod_{(\alpha,\beta) \in R^2} \left(\sum_{v \in R} \chi(\beta v) x_{\gamma(\alpha) \gamma(v)}\right)^{\eta_{\alpha\beta}(c,d)} \\
			& = \C\JE_{C,D}\left(\sum_{v \in R} \chi(\beta v) x_{\gamma(\alpha) \gamma(v)} \text{ with } (\alpha,\beta) \in R^2\right) \\
			& = (I \otimes T_{R}) \C\JE_{C,D}(x_{ij}).
	\end{align*}
	Hence, the proof is completed.
\end{proof}

Now from the above Theorem~\ref{ThMacWilliams}, 
we have the generalized MacWilliams identity 
for the average complete joint weight enumerator 
of codes $C$ and $D$ as follows:
\[
	\C\Jav_{\tilde{C},\tilde{D}}
	(x_{ij} \text{ with } i,j \in K) 
	= 
	\dfrac{1}{|C|^{\delta(C,\tilde{C})}|D|^{\delta(D,\tilde{D})}} T_{R}^{\delta(C,\tilde{C})} 
	\otimes 
	T_{R}^{\delta(D,\tilde{D})} 
	\C\Jav_{C,D}(x_{ij}).
\]

\section{Main Result}\label{SecProofMainTheorem}

In this section, 
we give the main result of 
this paper 
which is presented in the following theorem.
\begin{thm}[Main Theorem]\label{ThMainTheorem}
	Let $C$ and $D$ be two $R$-linear codes of length $n$, 
	and 
	$r$ and $s$ be the compositions of $n$. 
	Again let $\eta$ be the bi-composition of $n$ such that 
	\begin{align*}
		r & = 
		\left(\sum_{\beta \in R} 
		\eta_{\omega_0\beta},\dots,\sum_{\beta\in R} 
		\eta_{\omega_{|R|-1}\beta}\right),\\
		s & = 
		\left(\sum_{\alpha \in R} 
		\eta_{\alpha\omega_0},\dots,\sum_{\alpha\in R} 
		\eta_{\alpha\omega_{|R|-1}}\right).
	\end{align*}
	Then we have
	\begin{align*}
		\C\Jav_{C,D}
		(x_{ij} & \text{ with } i,j \in K) \\
		& = 
		\sum_{r,s,\eta} 
		A_r^C A_s^D 
		\dfrac{\prod\limits_{i=0}^{|R|-1}
		\dbinom{s_i}{\eta_{\omega_0\omega_i}, \dots, \eta_{\omega_{|R|-1}\omega_i}}}
		{\dbinom{n}{r_0, \dots, r_{|R|-1}}} 
		\prod_{i,j=0}^{|R|-1} 
		x_{ij}^{\eta_{\omega_i\omega_j}},
	\end{align*} 
	where
	\[
		\dbinom{a}{b_0, b_1, \dots, b_{m}} 
		:= 
		\dfrac{a!}{b_0! b_1! \dots b_m!}.
	\]
\end{thm}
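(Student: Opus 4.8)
The plan is to expand the average complete joint weight enumerator directly from the definition and to reorganize the resulting double sum according to the compositions of the two codewords involved. First I would write
\[
	\C\Jav_{C,D}(x_{ij})
	=
	\frac{1}{n!}\sum_{\sigma \in \SSn}
	\sum_{\substack{c \in C \\ d \in D}}
	\prod_{i,j=0}^{|R|-1} x_{ij}^{\eta_{\omega_i\omega_j}(c^\sigma, d)}.
\]
Since $\eta_{\omega_i\omega_j}(c^\sigma,d)$ counts positions where $(c_{\sigma(\ell)}, d_\ell) = (\omega_i,\omega_j)$, I would fix a pair $(c,d)$ and ask: as $\sigma$ ranges over $\SSn$, how many $\sigma$ produce a given bi-composition $\eta$? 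The composition $r = \comp(c)$ and $s = \comp(d)$ are invariant under the permutation of $c$, and the achievable $\eta$'s are exactly the "contingency tables" with row sums $r$ and column sums $s$ (this is the compatibility condition displayed in the statement). For a fixed such $\eta$, the number of $\sigma$ realizing it is obtained by a standard counting argument: within each block of $d$-coordinates equal to $\omega_i$ (there are $s_i$ of them), we must place $\eta_{\omega_0\omega_i}$ of the $c$-coordinates equal to $\omega_0$, etc., which gives $\prod_i \binom{s_i}{\eta_{\omega_0\omega_i},\dots,\eta_{\omega_{|R|-1}\omega_i}}$ ways to choose which $c$-value lands where, times the number of internal permutations $\prod_{i,j}(\eta_{\omega_i\omega_j})!$ times $\prod_i (r_i)!$... — more cleanly, the count of $\sigma \in \SSn$ giving bi-composition $\eta$ to the fixed pair $(c,d)$ equals $\bigl(\prod_i s_i!\bigr)\bigl(\prod_i r_i!\bigr) / \prod_{i,j} \eta_{\omega_i\omega_j}!$, and dividing by $n! = \binom{n}{r_0,\dots,r_{|R|-1}}\prod_i r_i!$ yields exactly the stated coefficient $\prod_i \binom{s_i}{\eta_{\omega_0\omega_i},\dots,\eta_{\omega_{|R|-1}\omega_i}} / \binom{n}{r_0,\dots,r_{|R|-1}}$.

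Next I would assemble the pieces: grouping the codewords of $C$ by their composition contributes a factor $A_r^C$, grouping the codewords of $D$ by their composition contributes $A_s^D$, and summing over all compatible $\eta$ with the above multiplicity gives
\[
	\C\Jav_{C,D}(x_{ij})
	=
	\sum_{r,s,\eta}
	A_r^C A_s^D
	\frac{\prod_{i=0}^{|R|-1}\binom{s_i}{\eta_{\omega_0\omega_i},\dots,\eta_{\omega_{|R|-1}\omega_i}}}{\binom{n}{r_0,\dots,r_{|R|-1}}}
	\prod_{i,j=0}^{|R|-1} x_{ij}^{\eta_{\omega_i\omega_j}},
\]
where the sum is understood to range only over triples $(r,s,\eta)$ with $\eta$ having row sums $r$ and column sums $s$ (outside that range the multinomial coefficients vanish, so no harm is done by writing the sum unrestricted).

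The main obstacle is the combinatorial counting in the middle step: one must argue carefully that the number of permutations $\sigma$ sending a fixed pair $(c,d)$ to a prescribed bi-composition $\eta$ depends only on $(r,s,\eta)$ and not on the particular representatives, and get the factorials exactly right. I would handle this by viewing a position $\ell \in \{1,\dots,n\}$ as being labeled by $d_\ell$, and $\sigma$ as an assignment of the multiset of $c$-values to these positions; the count is then a product over the $|R|$ "columns" (values of $d$) of the number of ways to distribute the appropriate $c$-values into that column, namely a multinomial coefficient, multiplied by the permutations internal to each of the $|R|^2$ cells and each group of equal $c$-values — and a symmetric bookkeeping check against $n!$ confirms the displayed formula. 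A worthwhile sanity check, which I would mention, is to specialize $x_{ij} \mapsto x_i$ (ignoring the second code) and verify that the identity collapses to $\C\Jav_{C,D} \mapsto |D| \cdot \C_C$, consistent with the marginal $\sum_\eta$ telescoping the multinomials.
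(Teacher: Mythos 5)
Your proposal is correct and follows essentially the same route as the paper: both reduce the problem to counting, for a fixed pair of codewords with compositions $r$ and $s$, the number of permutations $\sigma$ realizing a prescribed bi-composition $\eta$, which equals $\bigl(\prod_i r_i!\bigr)\prod_i s_i!/\prod_{i,j}\eta_{\omega_i\omega_j}!$ (stabilizer of the codeword times the number of rearrangements with the given contingency table), and then sum over codewords grouped by composition to get the factors $A_r^C A_s^D$. The paper packages this via the quantity $B_{r,s,\eta}^{C^\sigma,D}$ summed over $\sigma$, but the counting kernel and the final bookkeeping against $n!=\binom{n}{r_0,\dots,r_{|R|-1}}\prod_i r_i!$ are identical to yours.
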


\begin{proof}
Let $C$ and $D$ be two $R$-linear codes of length~$n$. 
Then the complete joint weight enumerator of $C$ and $D$ is
\begin{equation}\label{EquDefCJWE}
	\C\JE_{C,D}(x_{ij} \text{ with } i,j \in K) 
	:= 
	\sum_{\eta} A_{\eta}^{C,D} 
	\prod_{i,j = 0}^{|R|-1} 
	x_{ij}^{\eta_{\omega_i\omega_j}},
\end{equation}
where $	\sum_{\alpha,\beta \in R} \eta_{\alpha\beta} = n$. 
Now let us define
\[
	B_{r,s,\eta}^{C,D} 
	:= 
	\#\{(u,v) \in C \times D 
	\mid 
	\comp(u)=r, \comp(v) = s, \eta(u,v) = \eta\}.
\]
Therefore, $A_\eta^{C,D} = B_{r,s,\eta}^{C,D}$,	
where 
\begin{align*}
	r & = \left(\sum_{\beta \in R} \eta_{\omega_0\beta},\dots,\sum_{\beta\in R} \eta_{\omega_{|R|-1}\beta}\right),\\
	s & = \left(\sum_{\alpha \in R} \eta_{\alpha\omega_0},\dots,\sum_{\alpha\in R} \eta_{\alpha\omega_{|R|-1}}\right).
\end{align*}
Hence, we can write from~(\ref{EquDefCJWE})
\begin{equation}
	\C\JE_{C,D}(x_{ij} \text{ with } i,j \in K) 
	:= 
	\sum_{r,s,\eta} 
	B_{r,s,\eta}^{C,D} 
	\prod_{i,j = 0}^{|R|-1} 
	x_{ij}^{\eta_{\omega_i\omega_j}}.
\end{equation}
Now
\begin{align*}
	\sum_{\sigma \in \SSn} 
	B_{r,s,\eta}^{C^\sigma,D} 
	& = 
	\# \{(u,v,\sigma) 
	\in 
	T_r^C \times T_s^D \times \SSn 
	\mid 
	\eta(u^\sigma,v) 
	= \eta \}\\		
	& = 
	\sum_{u \in T_r^C}
	\sum_{v \in T_s^D} 
	\# \{\sigma \in \SSn \mid \eta(u^\sigma,v) = \eta\}.
\end{align*}
It is well known that the order of a subgroup of $\SSn$ which stabilizes $u \in T_r^C$ is $\prod_{i=0}^{|R|-1}r_{i}!$. Therefore,

\begin{align*}
	\sum_{\sigma \in \SSn} B_{r,s,\eta}^{C^\sigma,D} 
	&= \sum_{u \in T_r^C}\sum_{v \in T_s^D} \prod_{i=0}^{|R|-1}r_{i}! 
	\# \{u^{\prime} \in R^{n} 
	\mid 
	\comp(u^{\prime}) = r,\eta(u^{\prime},v) = \eta\}\\		
	&= \sum_{u \in T_r^C}\sum_{v \in T_s^D} \prod_{i=0}^{|R|-1}r_{i}! \prod_{i=0}^{|R|-1}\dfrac{s_i!}{\prod_{j=0}^{|R|-1}\eta_{\omega_j\omega_i}!} \\		
	&= A_r^C A_s^D \prod_{i=0}^{|R|-1}r_{i}! \prod_{i=0}^{|R|-1}\dfrac{s_i!}{\prod_{j=0}^{|R|-1}\eta_{\omega_j\omega_i}!}\\
	&= A_r^C A_s^D n!\dfrac{\prod_{i=0}^{|R|-1}\dfrac{s_i!}{\prod_{j=0}^{|R|-1}\eta_{\omega_j\omega_i}!}}{\dfrac{n!}{\prod_{i=0}^{|R|-1}r_{i}!}}\\
	&= 
	A_r^C A_s^D n!
	\dfrac{\prod_{i=0}^{|R|-1}
		\dbinom{s_i}{\eta_{\omega_0\omega_i}, 
		\dots, 
		\eta_{\omega_{|R|-1}\omega_i}}}
	{\dbinom{n}{r_0, r_1, \dots, r_{|R|-1}}}.
\end{align*}
Now we have
\begin{align*}
	\C\Jav_{C,D}&(x_{ij} \text{ with } i,j \in K)\\ 
	&= \dfrac{1}{n!} \sum_{\sigma\in \SSn} \C\JE_{C^\sigma,D}(x_{ij})\\
	&= \dfrac{1}{n!} \sum_{r,s,\eta} \sum_{\sigma\in \SSn} B_{r,s,\eta}^{C^\sigma,D} \prod_{i,j = 0}^{|R|-1} x_{ij}^{\eta_{\omega_i\omega_j}} \displaybreak\\
	&= \sum_{r,s,\eta} A_r^C A_s^D \dfrac{\prod_{i=0}^{|R|-1}\dbinom{s_i}{\eta_{\omega_0\omega_i}, \dots, \eta_{\omega_{|R|-1}\omega_i}}}{\dbinom{n}{r_0, r_1, \dots, r_{|R|-1}}} \prod_{i,j = 0}^{|R|-1} x_{ij}^{\eta_{\omega_i\omega_j}}.
\end{align*}
This completes the proof.
\end{proof}

\section{Average of $g$-fold Complete Joint Weight Enumerators}\label{Secgfold}

In this section, we give a generalization of the Main Theorem for the average $g$-fold complete joint weight enumerators of codes over $R$.

Let $C_1,C_2, \dots, C_g$ be $R$-linear codes of length $n$.
We denote by $\eta^g(c_1,\dots,c_g)$ the \emph{$g$-fold composition} of $g$-tuple 
$$(c_1, \dots , c_g) \in C_1 \times \dots \times C_g,$$ 
which is a vector with non-negative integer components $\eta_a^g(c_1,\dots,c_g)$ for $a \in R^g$ and defined as:
\[
	\eta_a^g(c_1,\dots,c_g) 
	:= 
	\#\{i \mid (c_{1i}, \dots, c_{gi}) = a\}.
\]
We denote by a $g$-fold composition $\eta^{g}$ of $n$ a vector with non-negative integer components $\eta_a^g$ for $a \in R^g$ such that 
\[
	\sum_{a \in R^g} \eta_a^g = n.
\]

We also denote by $T_{\eta^{g}}^{C_1,\dots,C_g}$ the set of codewords of $C_1 \times \dots \times C_g$ with $g$-fold composition $\eta^{g}$. The \emph{$g$-fold complete joint weight enumerator} is defined as follows:
\begin{align*}
	\C\JE_{C_1,\dots,C_g}
	(x_{\gamma(a)} \text{ with } a \in R^g) 
		& := 
		\sum_{c_1 \in C_1, \dots, c_g \in C_g} 
		\prod_{a \in R^g} 
		x_{\gamma(a)}^{\eta_a^g(c_1,\dots, c_g)} \\
		& = 
		\sum_{\eta^g} A_{\eta^g}^{C_1,\dots,C_g} 
		\prod_{a \in R^g} 
		x_{\gamma(a)}^{\eta_a^g},
\end{align*}
where $x_{\gamma(a)}$ for $a \in R^g$ with $\gamma(a) := (\gamma(a_1), \dots, \gamma(a_g))$ are the indeterminates and $A_{\eta^g}^{C_1,\dots,C_g}$ is the number of $g$-tuples $(c_1,\dots, c_g) \in C_1 \times \dots \times C_g$ such that $\eta^g(c_1,\dots,c_g) = \eta^g$. The \emph{average $g$-fold complete joint weight enumerators} are defined as:
\[
	\C\Jav_{C_1,C_2,\dots,C_g}
	(x_{\gamma(a)} \text{ with } a \in R^g) 
	:= 
	\dfrac{1}{n!}
	\sum_{\sigma \in \SSn} 
	\C\JE_{C_1^\sigma,C_2,\dots,C_g}(x_{\gamma(a)}).
\]

Let $a = (a_1,\dots, a_g) \in R^g$ and $b = (b_1,\dots, b_{g-1}) \in R^{g-1}$. Then we denote 
\begin{align*}
	[a;j] & := (a_1,\dots,a_{j-1},a_{j+1},\dots,a_g) \in R^{g-1},\\
	(z;b) & := (z,b_1,\dots,b_{g-1}) \in R^g \text{ for } z \in R.
\end{align*}

Now we have the following generalization of Theorem~\ref{ThMainTheorem}.

\begin{thm}\label{thm:gen}
	Let $C_1,C_2, \dots, C_g$ be the $R$-linear codes of length $n$ and $s_1,s_2, \dots, s_g$ be the composition of $n$. Again let $\eta^g$ be the $g$-fold composition of $n$ such that
	\[
		s_j = \left(\sum_{a \in R^g}\eta_{a}^g \text{ with } a_j=\omega_{i} \text{ for } i \in K\right) \quad \text{ where } j = 1, 2, \dots, g,
	\]
	and $\eta^{g-1}$ be the $(g-1)$-fold composition of $n$ such that the non-negative integer components $\eta_b^{g-1}$ for $b \in R^{g-1}$ is equal to the sum of $\eta_{a}^g$ over all $a \in R^g$ with $[a;1]=b$, that is,
	\[
		\eta_b^{g-1} 
		=  
		\sum_{a \in R^g} \eta_{a|_{[a;1]=b}}^g.
	\]
	Then we have
	\begin{align*}
		& \C\Jav_{C_1,\dots,C_g} (x_{\gamma(a)} \text{ with } a \in R^g) \\
		& = \sum_{s_1,\eta^{g-1},\eta^g} A_{s_1}^{C_1} A_{\eta^{g-1}}^{C_2,\dots,C_g} \dfrac{\prod\limits_{b \in R^{g-1}}\dbinom{\eta_b^{g-1}}{\eta_{(\omega_0;b)}^g, \dots, \eta_{(\omega_{|R|-1},b)}^g}}{\dbinom{n}{s_{10}, \dots, s_{1|R|-1}}} \prod_{a \in R^g} x_{\gamma(a)}^{\eta_{a}^g},
	\end{align*}
	where 
	\[
		\dbinom{r}{r_0, r_1, \dots, r_{m}} 
		:= 
		\dfrac{r!}{r_0! r_1! \dots r_m!}.
	\]
\end{thm}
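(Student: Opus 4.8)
The plan is to mimic the structure of the proof of Theorem~\ref{ThMainTheorem}, taking the first code $C_1$ as the ``distinguished'' code on which the symmetric group acts, and treating the remaining tuple $(C_2,\dots,C_g)$ as a single object whose distribution is recorded by the $(g-1)$-fold composition $\eta^{g-1}$. First I would introduce the refined counting quantity
\[
	B_{s_1,\eta^{g-1},\eta^g}^{C_1,\dots,C_g}
	:=
	\#\{(c_1,\dots,c_g) \in C_1\times\dots\times C_g
	\mid
	\comp(c_1)=s_1,\ \eta^{g-1}(c_2,\dots,c_g)=\eta^{g-1},\ \eta^g(c_1,\dots,c_g)=\eta^g\},
\]
and observe that, exactly as in the two-code case, the constraint $\eta^g(c_1,\dots,c_g)=\eta^g$ already forces $\comp(c_1)=s_1$ (via the $j=1$ marginal) and $\eta^{g-1}(c_2,\dots,c_g)=\eta^{g-1}$ (via the $[a;1]=b$ marginal), so that $A_{\eta^g}^{C_1,\dots,C_g}=B_{s_1,\eta^{g-1},\eta^g}^{C_1,\dots,C_g}$ whenever $s_1$ and $\eta^{g-1}$ are the marginals of $\eta^g$. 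This lets me rewrite the $g$-fold complete joint weight enumerator as a sum over triples $(s_1,\eta^{g-1},\eta^g)$ with coefficients $B_{s_1,\eta^{g-1},\eta^g}^{C_1,\dots,C_g}$.

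Next I would compute $\sum_{\sigma\in\SSn} B_{s_1,\eta^{g-1},\eta^g}^{C_1^\sigma,\dots,C_g}$ by the same orbit-counting argument. Writing this sum as
\[
	\#\{(c_1,c_2,\dots,c_g,\sigma)
	\mid
	c_1\in T_{s_1}^{C_1},\ \eta^{g-1}(c_2,\dots,c_g)=\eta^{g-1},\ \eta^g(c_1^\sigma,c_2,\dots,c_g)=\eta^g\},
\]
I would fix $c_1\in T_{s_1}^{C_1}$ and a tuple $(c_2,\dots,c_g)$ of the prescribed $(g-1)$-fold composition, and count the $\sigma$ with $\eta^g(c_1^\sigma,c_2,\dots,c_g)=\eta^g$. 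Since the stabilizer of $c_1$ in $\SSn$ has order $\prod_{i}s_{1i}!$, this count equals $\big(\prod_i s_{1i}!\big)$ times the number of words $c_1'\in R^n$ with $\comp(c_1')=s_1$ and $\eta^g(c_1',c_2,\dots,c_g)=\eta^g$; and the latter number is the multinomial product $\prod_{b\in R^{g-1}}\binom{\eta_b^{g-1}}{\eta_{(\omega_0;b)}^g,\dots,\eta_{(\omega_{|R|-1};b)}^g}$, because within each block of the $\eta^{g-1}$-pattern of $(c_2,\dots,c_g)$ one independently chooses which coordinates receive which symbol of $c_1'$. Summing over $c_1$ gives a factor $A_{s_1}^{C_1}$ and summing over $(c_2,\dots,c_g)$ gives a factor $A_{\eta^{g-1}}^{C_2,\dots,C_g}$; rewriting $\prod_i s_{1i}! = n!/\binom{n}{s_{10},\dots,s_{1|R|-1}}$ then yields the claimed coefficient, after which dividing by $n!$ and resumming against the monomials $\prod_{a\in R^g}x_{\gamma(a)}^{\eta_a^g}$ finishes the computation.

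The only real subtlety—and the step I expect to need the most care—is the bookkeeping in the middle count: verifying that, once $(c_2,\dots,c_g)$ is fixed with $(g-1)$-fold composition $\eta^{g-1}$, the number of ways to fill in a word $c_1'$ of composition $s_1$ so that the joint $g$-fold composition is exactly $\eta^g$ factors as the stated product of multinomials indexed by $b\in R^{g-1}$. This is the natural $(g-1)$-fold generalization of the identity $\#\{u'\mid\comp(u')=r,\ \eta(u',v)=\eta\}=\prod_i s_i!/\prod_j\eta_{\omega_j\omega_i}!$ used in Theorem~\ref{ThMainTheorem}, and it follows by partitioning the $n$ coordinate positions according to the value of $(c_{2i},\dots,c_{gi})\in R^{g-1}$: on the block of size $\eta_b^{g-1}$ where this value equals $b$, the assignment of the first-coordinate symbols is an arbitrary rearrangement realizing the prescribed counts $\eta_{(\omega_0;b)}^g,\dots,\eta_{(\omega_{|R|-1};b)}^g$, and these choices across distinct $b$ are independent. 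One should also check consistency, namely that $\sum_{b}\eta_{(\omega_i;b)}^g = s_{1i}$, which is precisely the $j=1$ marginal hypothesis on $\eta^g$; granting this, the remaining manipulations are routine and parallel the $g=2$ proof line by line.
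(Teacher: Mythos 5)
Your proposal is correct and follows essentially the same route as the paper's proof: the same refined counter $B_{s_1,\eta^{g-1},\eta^g}^{C_1,\dots,C_g}$, the same orbit/stabilizer count over $\SSn$ with stabilizer order $\prod_i s_{1i}!$, and the same block-by-block multinomial count of words $c_1'$ compatible with a fixed $(c_2,\dots,c_g)$, followed by the division by $n!$ as in Theorem~\ref{ThMainTheorem}. Your explicit justification of the multinomial factor and of the marginal consistency $\sum_b \eta_{(\omega_i;b)}^g = s_{1i}$ merely spells out details the paper leaves implicit.
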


\begin{proof}
	Let $C_1,\dots, C_g$ be $R$-linear codes of length $n$. Then by the definition of $g$-fold complete joint weight enumerator of the codes $C_1,\dots,C_g$ we have,
	\begin{equation}\label{EquDefCgfoldJWE}
		\C\JE_{C_1,\dots,C_g}
		(x_{\gamma(a)} \text{ with } a \in R^g) 
		:= 
		\sum_{\eta^g} 
		A_{\eta^g}^{C_1,\dots,C_g} 
		\prod_{a \in R^g} 
		x_{\gamma(a)}^{\eta_{a}^g},
	\end{equation}
	where
	\[
		\sum_{a \in R^g} \eta_{a}^g = n.
	\] 
	Now let us define
	\begin{align*}
		B_{s_1,\eta^{g-1},\eta^g}^{C_1,\dots,C_g} 
		:= 
		\#
		\{
			(c_1,\dots,c_g) \in 
			& 
			C_1 \times\dots\times C_g 
			\mid 
			\comp(c_1)=s_1, \\ 
			& 
			\eta^{g-1}(c_2,\dots,c_g) 
			= 
			\eta^{g-1}, 
			\eta^g(c_1,\dots,c_g) 
			= 
			\eta^g
		\}.
	\end{align*}
	Therefore,
	\[
		A_{\eta^g}^{C_1,\dots,C_g} 
		= 
		B_{s_1,\eta^{g-1},\eta^g}^{C_1,\dots,C_g}.
	\]
	Hence, we can write from~(\ref{EquDefCgfoldJWE})
	\begin{equation}
		\C\JE_{C_1,\dots,C_g}(x_{\gamma(a)} \text{ with } a \in R^g) 
		:= 
		\sum_{s_1,\eta^{g-1},\eta^g} 
		B_{s_1,\eta^{g-1},\eta^g}^{C_1,\dots,C_g} 
		\prod_{a \in R^g} 
		x_{\gamma(a)}^{\eta_a^g}.
	\end{equation}
	Now
	\begin{align*}
		& \sum_{\sigma \in \SSn} B_{s_1,\eta^{g-1},\eta^g}^{C_1^\sigma,C_2,\dots,C_g} \\
		= & \# \{(c_1,\dots,c_g,\sigma) \in T_{s_1}^{C_1} \times\dots\times T_{s_g}^{C_g} \times \SSn \mid \eta^g(c_1^\sigma,c_2,\dots,c_g) = \eta^g \} \\		
		= & \sum_{c_1 \in T_{s_1}^{C_1}}\sum_{(c_2,\dots,c_g) \in T_{\eta^{g-1}}^{C_2,\dots,C_g}} \# \{\sigma \in \SSn \mid \eta^g(c_1^\sigma,c_2,\dots,c_g) = \eta^g \}.
	\end{align*}
	It is well known that the order of a subgroup of $\SSn$ which stabilizes $c_1 \in T_{s_1}^{C_1}$ is $\prod_{i=0}^{|R|-1}s_{1i}!$. Therefore,
	
	\begin{align*}
		\mathclap{\sum_{\sigma \in \SSn} 
		B_{s_1,\eta^{g-1},\eta^g}^{C_1^\sigma,C_2,\dots,C_g}} & \notag \\
		& = 
		\sum_{c_1 \in T_{s_1}^{C_1}}
		\sum_{(c_2,\dots,c_g) \in T_{\eta^{g-1}}^{C_2,\dots,C_g}} \prod_{i=0}^{|R|-1}s_{1i}! \notag \\
		& \quad \quad 
		\# \{c_{1}^{\prime}\in R^{n} 
		\mid 
		\comp(c_{1}^{\prime}) = s_1,  
		\eta^g(c_1^\prime,c_2,\dots,c_g) = \eta^g\}\\
		& = 
		A_{s_1}^{C_1} A_{\eta^{g-1}}^{C_2,\dots,C_g} 
		\prod_{i=0}^{|R|-1}s_{1i}!
		\prod\limits_{b \in R^{g-1}}\dfrac{(\eta_b^{g-1})!}{(\eta_{(\omega_0;b)}^g)! \dots (\eta_{(\omega_{|R|-1};b)}^g)!}.
	\end{align*}
	Now it is easy to complete the proof by following similar arguments stated in the proof of Theorem~\ref{ThMainTheorem}.
\end{proof}

\section{The Average of Intersection Numbers}\label{SecAvInterNum}

The notion of 
the average intersection number was introduced in~\cite{Y1989}
for binary linear codes. We take the same notion for $R$-linear codes~$C$ and~$D$ of length~$n$ and define the \emph{average intersection number} as follows:
\[
	\Delta(C,D) 
	:= 
	\dfrac{1}{n!} 
	\sum_{\sigma \in \SSn} 
	|C \cap D^\sigma|.
\]

Now we have the following result.

\begin{prop}
	Let $C,D$ be two $R$-linear code of length $n$, 
	and $r$ be the composition of $n$. 
	Then we have
	\[
		\Delta(C,D) 
		= 
		\sum_{r} 
		\dfrac{A_{r}^{C} A_{r}^{D}}{\dbinom{n}{r_0,\dots,r_{|R|-1}}}.
	\]
\end{prop}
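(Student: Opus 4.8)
The plan is to derive this proposition directly from the Main Theorem (Theorem~\ref{ThMainTheorem}) by specializing the variables of the average complete joint weight enumerator. The key observation is that $|C \cap D^\sigma|$ counts exactly those pairs $(u,v) \in C \times D$ with $u = v^\sigma$, i.e. pairs where the two codewords agree coordinate-by-coordinate after applying $\sigma$; in the language of bi-compositions, this is the number of pairs $(u,v)$ for which the bi-composition $\eta$ is \emph{diagonal}, meaning $\eta_{\alpha\beta} = 0$ whenever $\alpha \neq \beta$. Equivalently, one can obtain $\Delta(C,D)$ from $\C\Jav_{C,D}$ by substituting $x_{ij} = 1$ if $i = j$ and $x_{ij} = 0$ if $i \neq j$, since this kills every monomial coming from a non-diagonal bi-composition and leaves each diagonal term contributing its coefficient.

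First I would make this substitution precise. From the definition of the average complete joint weight enumerator and the fact that $|C \cap D^\sigma| = \C\JE_{C^\sigma,D}(x_{ij})\big|_{x_{ii}=1,\,x_{ij}=0\,(i\neq j)}$ for each fixed $\sigma$, we get
\[
	\Delta(C,D)
	=
	\C\Jav_{C,D}(x_{ij})\Big|_{x_{ii}=1,\ x_{ij}=0\ (i\neq j)}.
\]
Then I would apply Theorem~\ref{ThMainTheorem} to the right-hand side. In the sum $\sum_{r,s,\eta} A_r^C A_s^D \frac{\prod_i \binom{s_i}{\cdots}}{\binom{n}{r_0,\dots}} \prod_{i,j} x_{ij}^{\eta_{\omega_i\omega_j}}$, the substitution forces $\eta_{\omega_i\omega_j} = 0$ for all $i \neq j$, so only diagonal bi-compositions $\eta$ survive. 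For such an $\eta$, the marginals force $r = s = (\eta_{\omega_0\omega_0}, \dots, \eta_{\omega_{|R|-1}\omega_{|R|-1}})$, so $r$ and $s$ coincide with the common vector I will call $r$; hence the outer sum collapses to a single sum over compositions $r$.

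Next I would evaluate the coefficient for a diagonal $\eta$ with diagonal entries $r = (r_0, \dots, r_{|R|-1})$. Each multinomial $\binom{s_i}{\eta_{\omega_0\omega_i}, \dots, \eta_{\omega_{|R|-1}\omega_i}}$ becomes $\binom{r_i}{0,\dots,0,r_i,0,\dots,0} = 1$, so the numerator product is $1$, leaving the coefficient $\frac{A_r^C A_r^D}{\binom{n}{r_0,\dots,r_{|R|-1}}}$, exactly as claimed. This is essentially a bookkeeping argument with no serious obstacle; the only point requiring a little care is verifying that the diagonal-bi-composition condition really does pin down $r = s$ via the marginal relations in the Main Theorem's hypotheses, and that no off-diagonal $\eta$ contributes a surviving monomial — both are immediate once the substitution is written out, but they are the steps where a sign or indexing slip would hide. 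Putting these together gives the stated formula for $\Delta(C,D)$.
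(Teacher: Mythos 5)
Your proposal is correct, but it takes a genuinely different route from the paper. The paper proves the proposition directly: it writes $n!\,\Delta(C,D)$ as the number of triples $(u,v,\sigma)\in C\times D\times\SSn$ with $u=v^\sigma$, groups the pairs by their common composition $r$ (noting $u=v^\sigma$ forces $\comp(u)=\comp(v)$), and uses the fact that the number of $\sigma$ carrying $v$ to $u$ is $\prod_i r_i!$, giving $n!\,\Delta(C,D)=\sum_r A_r^C A_r^D \prod_i r_i!$. You instead specialize Theorem~\ref{ThMainTheorem}: setting $x_{ii}=1$ and $x_{ij}=0$ for $i\neq j$ kills all non-diagonal bi-compositions, the marginal relations force $r=s=(\eta_{\omega_0\omega_0},\dots,\eta_{\omega_{|R|-1}\omega_{|R|-1}})$, and each multinomial $\binom{s_i}{0,\dots,r_i,\dots,0}$ collapses to $1$, which yields exactly the stated formula. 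Your approach buys economy once the Main Theorem is in hand and makes clear that the proposition is literally the diagonal specialization of the average joint enumerator; the paper's argument is self-contained and is essentially the same stabilizer-counting computation carried out in the special case, without invoking the heavier result. One small point of care: for a fixed $\sigma$ the diagonal evaluation of $\C\JE_{C^\sigma,D}$ equals $|C^\sigma\cap D|=|C\cap D^{\sigma^{-1}}|$, not $|C\cap D^{\sigma}|$; this is harmless because $\sigma\mapsto\sigma^{-1}$ is a bijection of $\SSn$, so the averages agree and your identity $\Delta(C,D)=\C\Jav_{C,D}(x_{ij})\big|_{x_{ii}=1,\ x_{ij}=0\ (i\neq j)}$ still holds, but the per-$\sigma$ claim should be stated with the inverse or justified by this reindexing.
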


\begin{proof}
	Let $T_{r}^{C}$ and $T_{r}^{D}$ 
	be the set of all elements of 
	$C$ and $D$, respectively, with the composition 
	$r = (r_0,\dots,r_{|R|-1})$ of $n$. 
	Then we can write
	\begin{align*}
		n! \Delta(C,D) 
			& = \sum_{\sigma \in \SSn} |C \cap D^\sigma|\\
			& = \#\{(u,v,\sigma) \in C \times D \times \SSn \mid u = v^\sigma\}\\
			& = \sum_{r} \sum_{u \in T_r^C} \sum_{v \in T_r^D} \#\{\sigma \in \SSn \mid u = v^\sigma\}\\
			& = \sum_{r} A_r^C A_r^D \prod_{i = 0}^{|R|-1} {r_i}!.	
	\end{align*}
	Hence, this completes the proof.
\end{proof}

Let $C \subseteq \FF_q^n$ for $q = 2,3,4$ be a code. Now for $m = 1,2$ we define 
\[
	\Delta_{J}^{m}(C) 
	:= 
	\dfrac{1}{|J_n|} \sum_{D \in J_n} {|C \cap D|}^m,
\]
where $J_n$ denotes the set of self-dual codes of Type $J$, 
where $J$ stands for 
$\I$, $\II$, $\III$ or $\IV$.
The following results for  
$J = \I$ and $\II$ are presented in~\cite{Y1991}. 

\begin{thm}[\cite{Y1991}]\label{ThDeltaOne}
	Let $C$ be a binary self-dual code of length $n$. Then
	\begin{itemize}
		\item [(i)] $\Delta_{\I}(C) \approx 4$ \quad if $C$ is of Type~$\I$,
		\item [(ii)] $\Delta_{\II}(C)\approx 6$ \quad if $C$ is of Type~$\II$.
	\end{itemize}
\end{thm}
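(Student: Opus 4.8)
The plan is to compute $\Delta_J^{1}(C)$ exactly as a rational expression in $n$ (for $J=\I$ and $J=\II$) and then read off its limit as $n\to\infty$; the symbol $\approx$ in the statement refers to this limit, and a by-product of the computation is that $\Delta_J^{1}(C)$ depends only on $n$, not on the particular Type~$J$ code $C$. Writing $\allone$ for the all-one vector and interchanging the two summations,
\[
	\Delta_J^{1}(C)
	=\frac{1}{|J_n|}\sum_{D\in J_n}|C\cap D|
	=\frac{1}{|J_n|}\sum_{v\in C}N_J(v),
	\qquad
	N_J(v):=\#\{D\in J_n\mid v\in D\},
\]
so everything reduces to evaluating $N_J(v)$ for $v\in C$. (Equivalently, one may partition $J_n$ into $\SSn$-orbits and apply the Proposition above to one representative per orbit; this yields $\Delta_J^{1}(C)=\sum_{w}A_w^{C}\,\overline{A_w^{J}}\big/\binom nw$ with $\overline{A_w^{J}}=\tfrac1{|J_n|}\sum_{\wt(v)=w}N_J(v)$, so the same quantities $N_J(v)$ are needed.)

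First I would isolate the two exceptional vectors. Every self-dual code contains $\0$, so $N_J(\0)=|J_n|$. Every codeword of a binary self-dual code has even weight, so $\allone$ is orthogonal to, and hence lies in, every self-dual code; and when $n\equiv0\pmod8$ one has $\wt(\allone)=n\equiv0\pmod4$, so $\allone$ lies in every Type~$\II$ code as well. Thus $N_J(\allone)=|J_n|$. Note $\0,\allone\in C$ because $C$ is self-dual, and $C$ has exactly one codeword of weight $0$ (namely $\0$) and exactly one of weight $n$ (namely $\allone$).

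Next I would compute $N_J(v)$ for $v\in C$ with $v\neq\0,\allone$ (so $0<\wt(v)<n$, and $\wt(v)\equiv0\pmod4$ when $J=\II$). If $v\in D=D^{\perp}$ then $\langle v\rangle\subseteq D\subseteq v^{\perp}$, so $D\mapsto D/\langle v\rangle$ is a bijection from $\{D\in J_n\mid v\in D\}$ onto the self-dual subspaces of the $(n-2)$-dimensional space $W:=v^{\perp}/\langle v\rangle$. Here \emph{self-dual} is meant for the bilinear form induced from the inner product when $J=\I$, and when $J=\II$ one additionally requires the subspace to be totally singular for the induced $\ZZ_4$-valued quadratic form $\bar q$, where $\bar q(\bar x)=\wt(x)\bmod 4$ (well defined because $x\cdot v=0$ on $v^{\perp}$). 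The crucial point is that this count does not depend on the choice of $v\neq\0,\allone$: for $J=\I$ one observes that $v^{\perp}$ contains an odd-weight vector (this uses $v\neq\allone$), so the induced bilinear form on $W$ is non-alternating and $W\cong\FF_2^{n-2}$ with the standard form; for $J=\II$ one evaluates the Gauss sum
\[
	\sum_{x\in v^{\perp}}\zeta_4^{\wt(x)}
	=\tfrac12\Big((1+\zeta_4)^{n}+(1+\zeta_4)^{n-\wt(v)}(1-\zeta_4)^{\wt(v)}\Big)
	=2^{n/2},
\]
valid for $n\equiv0\pmod8$ and $\wt(v)\equiv0\pmod4$ (with $\zeta_4$ a primitive fourth root of unity), which determines the isometry class of $(W,\bar q)$ independently of the value $\wt(v)$. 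In either case $N_J(v)$ equals the value of the relevant mass formula at length $n-2$, namely $N_\I(v)=\prod_{i=1}^{n/2-2}(2^i+1)$ and $N_{\II}(v)=\prod_{i=0}^{n/2-3}(2^i+1)$.

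Finally I would assemble the pieces. Since $|C|=2^{n/2}$ and $\0,\allone$ each contribute $|J_n|$ while the remaining $|C|-2$ codewords all contribute the common value $N_J(v)$,
\[
	\Delta_J^{1}(C)=\frac{1}{|J_n|}\Big(2|J_n|+(2^{n/2}-2)\,N_J(v)\Big)=2+(2^{n/2}-2)\,\frac{N_J(v)}{|J_n|}\qquad(v\neq\0,\allone),
\]
and combining with the mass formulas $|\I_n|=\prod_{i=1}^{n/2-1}(2^i+1)$, $|\II_n|=\prod_{i=0}^{n/2-2}(2^i+1)$ gives $N_\I(v)/|\I_n|=(2^{n/2-1}+1)^{-1}$ and $N_{\II}(v)/|\II_n|=(2^{n/2-2}+1)^{-1}$, so that
\[
	\Delta_{\I}^{1}(C)=2+\frac{2^{n/2}-2}{2^{n/2-1}+1}\xrightarrow[n\to\infty]{}4,
	\qquad
	\Delta_{\II}^{1}(C)=2+\frac{2^{n/2}-2}{2^{n/2-2}+1}\xrightarrow[n\to\infty]{}6,
\]
which is the assertion. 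I expect the middle step to be the main obstacle: identifying $W$ up to isometry and counting its maximal totally isotropic (resp.\ totally singular) subspaces, in particular dealing with the $\ZZ_4$-valued quadratic form that occurs for Type~$\II$, and verifying that this count is genuinely the same for every admissible $v\neq\0,\allone$. The Gauss-sum evaluation displayed above is the technical core of that step, and it is also exactly what underlies the mass-formula values of $N_J(v)$ used at the end.
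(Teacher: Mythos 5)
The paper gives no proof of Theorem~\ref{ThDeltaOne}; it is quoted from \cite{Y1991}. Your argument is correct, and it is essentially the same strategy that the paper (and \cite{Y1991}) uses for the Type~$\III$/$\IV$ analogues in Section~\ref{SecAvInterNum}: exchange the two sums so that everything reduces to the number $N_J(v)$ of Type~$J$ codes through a fixed codeword, then feed in mass-formula-type counts. You also handle correctly the one point where the binary case genuinely differs from Theorems~\ref{thm:III} and~\ref{thm:IV}: besides $\0$, the all-one vector $\allone$ lies in \emph{every} binary self-dual code (and in every Type~$\II$ code when $n\equiv 0\pmod 8$), so it contributes $|J_n|$ rather than $N_J(v)$; this is exactly what produces the leading term $2$ (instead of the $1$ appearing in the Type~$\III$/$\IV$ computations) and hence the limits $4$ and $6$. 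Your exact expressions simplify to $\Delta_{\I}(C)=4-4/(2^{n/2-1}+1)$ and $\Delta_{\II}(C)=6-6/(2^{n/2-2}+1)$, independent of the particular code $C$ and exactly parallel to Theorem~\ref{thm:III}(i) and Theorem~\ref{thm:IV}(i), which justifies the ``$\approx$''. The only soft spot, which you flag yourself, is the middle step: identifying the isometry class of $v^{\perp}/\langle v\rangle$ and counting its maximal self-dual (resp.\ doubly even self-dual) subspaces is sketched via the Gauss sum rather than proved. That step need not be done from scratch: the values $N_{\I}(v)=\prod_{i=1}^{n/2-2}(2^{i}+1)$ and $N_{\II}(v)=\prod_{i=0}^{n/2-3}(2^{i}+1)$ for $v\neq\0,\allone$ are the standard binary extension counts (the analogues of $N_{n,1}^{\III}$ and $N_{n,1}^{\IV}$ that the paper cites from the literature), and quoting them closes the argument; your numbers are consistent with them (for instance, at $n=8$ each weight-$4$ vector lies in exactly $6=\prod_{i=0}^{1}(2^{i}+1)$ of the $30$ Type~$\II$ codes, giving $\Delta_{\II}(C)=144/30=4.8=6-6/5$).
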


\begin{thm}[\cite{Y1991}]\label{ThDeltaTwo}
	Let $C$ be a binary self-dual code of length $n$. Then
	\begin{itemize}
		\item [(i)] $\Delta_{\I}^{2}(C) \approx 24$ \quad if $C$ is of Type~$\I$,
		\item [(ii)] $\Delta_{\II}^{2}(C)\approx 60$ \quad if $C$ is of Type~$\II$.
	\end{itemize}
\end{thm}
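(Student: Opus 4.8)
The plan is to evaluate $\Delta_J^2(C)$ by a linearity-of-expectation argument that reduces it to counting self-dual (resp.\ doubly-even self-dual) binary codes containing a prescribed subcode. Writing $[\,v\in D\,]$ for the indicator of the event $v\in D$ and using $|C\cap D|=\sum_{v\in C}[\,v\in D\,]$, one gets
\[
	\Delta_J^2(C)=\frac{1}{|J_n|}\sum_{D\in J_n}\sum_{u,v\in C}[\,u\in D\,]\,[\,v\in D\,]=\sum_{u,v\in C}\frac{N_J\!\left(\langle u,v\rangle\right)}{|J_n|},
\]
where $N_J(W):=\#\{D\in J_n\mid W\subseteq D\}$ for a subcode $W\subseteq\FF_2^{\,n}$ and $\langle u,v\rangle$ denotes the span of $u$ and $v$. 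Because $C$ is self-dual, every $W$ arising here is totally isotropic (hence all its codewords have even weight, and in the Type~$\II$ case even doubly-even), so every $N_J(W)$ that occurs is positive; note also $|J_n|=N_J(\{0\})$.

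The heart of the matter is a counting lemma: $N_J(W)$ depends only on $d:=\dim W$ and on whether $\allone\in W$. This follows from the bijection $D\mapsto D/W$ between the codes $D\in J_n$ with $W\subseteq D$ and the self-dual (resp.\ doubly-even self-dual) codes of the quotient $W^{\perp}/W\cong\FF_2^{\,n-2d}$ equipped with the induced form, combined with the observation that over $\FF_2$ the isometry type of this induced form obeys a single dichotomy: $W^{\perp}$ contains an odd-weight vector exactly when $\allone\notin W^{\perp\perp}=W$. Writing $N(m):=\prod_{i=1}^{m/2-1}(2^i+1)$ and $M(m):=2\prod_{i=1}^{m/2-2}(2^i+1)$ for the numbers of self-dual, resp.\ doubly-even self-dual, binary codes of length~$m$, this gives $N_{\I}(W)=N(n-2d)$ and $N_{\II}(W)=M(n-2d)$ when $\allone\notin W$, and $N_{\I}(W)=N(n-2d+2)$, $N_{\II}(W)=M(n-2d+2)$ when $\allone\in W$; the first case is the mass formula applied to the quotient, the second comes from counting Lagrangian subspaces of a symplectic space (resp.\ maximal totally singular subspaces of an $O^{+}$ space) after quotienting out $\allone$.

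It then remains to sort the ordered pairs $(u,v)\in C\times C$ by $\dim\langle u,v\rangle\in\{0,1,2\}$ and by the location of $\allone$: there is the single pair with $W=\{0\}$; for each nonzero $w\in C$ the three pairs $(0,w),(w,0),(w,w)$ span $\langle w\rangle$, with $\allone\in\langle w\rangle$ only when $w=\allone$; and the remaining $(2^{n/2}-1)(2^{n/2}-2)$ pairs span a plane, $\allone$ lying in it for precisely $3(2^{n/2}-2)$ of them (namely when $\allone$ equals $u$, $v$, or $u+v$). Inserting the counting lemma yields the exact identity
\[
	\Delta_{\I}^2(C)=4+6\,(2^{n/2}-2)\,\frac{N(n-2)}{N(n)}+(2^{n/2}-2)(2^{n/2}-4)\,\frac{N(n-4)}{N(n)},
\]
and the same formula with $N$ replaced by $M$ for $\Delta_{\II}^2(C)$. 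Since $N(n-2)/N(n)\sim 2^{-(n/2-1)}$, $N(n-4)/N(n)\sim 2^{-(n-3)}$, $M(n-2)/M(n)\sim 2^{-(n/2-2)}$ and $M(n-4)/M(n)\sim 2^{-(n-5)}$, letting $n\to\infty$ gives $\Delta_{\I}^2(C)\to 4+12+8=24$ and $\Delta_{\II}^2(C)\to 4+24+32=60$.

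The main obstacle I anticipate is the counting lemma — establishing that $N_J(W)$ is truly insensitive to $W$ apart from $\dim W$ and the position of $\allone$, which forces one to identify the $\FF_2$-isometry type of the form induced on $W^{\perp}/W$: distinguishing alternating from non-alternating symmetric bilinear forms in the Type~$\I$ case, and, in the Type~$\II$ case, tracking the type of the induced quadratic form and confirming it stays of $+$-type when $8\mid n$. Once that structural input is secured the remaining steps are routine bookkeeping, and the same scheme, run over $\FF_3$ and $\FF_4$ with the ternary and quaternary mass formulas in place of $N$ and $M$, is what will drive Theorems~\ref{thm:III} and~\ref{thm:IV}.
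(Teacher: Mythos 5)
Your proposal is correct, and it follows the same basic scheme that this paper itself uses for the ternary and quaternary analogues (Theorems~\ref{thm:III} and~\ref{thm:IV}): expand ${|C\cap D|}^m$ as a sum of indicators, sort the (pairs of) codewords by the subcode they span, and divide the count of codes in $J_n$ containing that subcode by the mass formula $|J_n|$. The paper does not actually reprove Theorem~\ref{ThDeltaTwo} (it quotes \cite{Y1991}), so the relevant comparison is with those Type~$\III$/$\IV$ proofs, and the place where you depart from a verbatim transcription of them is precisely where such a transcription would break down: over $\FF_2$ every self-dual code contains $\allone$, so $\#\{D\in J_n\mid u\in D\}$ is \emph{not} the same for all nonzero $u\in C$, and one must split the counts according to whether $\allone\in\langle u,v\rangle$ (ignoring this would give $\Delta_{\I}\approx 3$, $\Delta_{\II}\approx 5$ rather than $4$ and $6$). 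Your counting lemma is the right tool and your stated values are correct: for $\allone\notin W$ the quotient $W^{\perp}/W$ is isometric to the standard non-alternating space and $N_{\I}(W)=N(n-2d)$, while for $\allone\in W$ it is symplectic (resp.\ an $O^{+}$ space of dimension $n-2d$ after the $\Harm$-free quotient bookkeeping you describe) and $N_{\I}(W)=N(n-2d+2)$, $N_{\II}(W)=M(n-2d+2)$; these are consistent with the first moments $\Delta_{\I}\approx 4$, $\Delta_{\II}\approx 6$ of Theorem~\ref{ThDeltaOne}, which is a good internal check. The pair bookkeeping ($4$ pairs with $N_J(W)=|J_n|$, $6(2^{n/2}-2)$ pairs giving the middle term, $(2^{n/2}-2)(2^{n/2}-4)$ giving the last) and the asymptotics $4+12+8=24$ and $4+24+32=60$ are all correct. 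The only step you leave informal is the one you flag yourself, namely the isometry-type identification of the induced form (alternating versus non-alternating for Type~$\I$, and preservation of $+$-type when quotienting an $O^{+}$ space by a totally singular subspace for Type~$\II$); these are standard facts about forms over $\FF_2$ and pose no obstruction, so the argument is complete in substance.
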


In this section, 
we give the analogous results of the above theorems for 
Type~$\III$ and Type~$\IV$ codes over $\FF_3$ and $\FF_4$ respectively. 
Before presenting our findings, 
we adopt the following mass formulas 
which give the numbers of Type~$\III$ and Type~$\IV$ codes 
over $\FF_3$ and $\FF_4$ respectively.

\begin{thm}[\cite{MOSW1978,VP1968}]
	The following hold:
	\begin{itemize}
		\item [(i)] The number of Type~$\III$ codes over $\FF_3$ of length 
		$n \equiv 0 \pmod 4$ is
		\[
			2\prod_{i=1}^{n/2-1} (3^i + 1).
		\]
		\item [(ii)] The number of Type~$\IV$ codes over $\FF_4$ of length 
		$n \equiv 0 \pmod 2$ is
		\[
			\prod_{i=0}^{n/2-1} (2^{2i+1}+1).
		\]
	\end{itemize}
\end{thm}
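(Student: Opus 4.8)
Both assertions are classical mass formulas, so the plan is only to indicate the argument. The idea is to pass from counting Type~$\III$ (resp.\ Type~$\IV$) codes to counting maximal totally isotropic subspaces of a suitable form over a finite field, and then to evaluate that count by a short recursion on the length.

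First I would discard the weight condition from the definitions. Over $\FF_3$, if $u\cdot u=0$ then $\sum_i u_i^{2}=0$, and since $u_i^{2}$ is $0$ or $1$ according as $u_i=0$ or $u_i\neq0$, this says $\wt(u)\equiv0\pmod3$; hence every self-dual code over $\FF_3$ is automatically Type~$\III$, and a discriminant computation (the ambient space $(\FF_3^{n},\ \sum_i x_i^{2})$ has discriminant $1$, and it carries a totally isotropic subspace of dimension $n/2$ only when $(-1)^{n/2}$ is a square in $\FF_3$, i.e.\ when $n\equiv0\pmod4$) recovers the length restriction. Likewise, over $\FF_4$ with the Hermitian inner product, $u_i\bar u_i=u_i^{3}\in\{0,1\}$, so $u\cdot u=0$ forces $\wt(u)$ even and every Hermitian self-dual code over $\FF_4$ is automatically Type~$\IV$, with $n$ even. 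Thus it suffices to count self-dual codes, i.e.\ the maximal totally isotropic subspaces of $(\FF_3^{n},\ \sum_i x_i^{2})$, respectively of $(\FF_4^{n},\ \sum_i x_i\bar x_i)$.

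Next I would pin down the isometry type of the ambient space. Over $\FF_3$ the binary form $x_1^{2}+x_2^{2}$ is anisotropic because $-1$ is a nonsquare, while $x_1^{2}+x_2^{2}+x_3^{2}+x_4^{2}$ is hyperbolic; hence for $n\equiv0\pmod4$ the space $(\FF_3^{n},\ \sum_i x_i^{2})$ is an orthogonal sum of $n/4$ hyperbolic $4$-spaces, so it is of $+$-type with Witt index $m:=n/2$. Over $\FF_4$ every nondegenerate Hermitian form of a given dimension is unique up to equivalence and, for $n$ even, has Witt index $m:=n/2$. In both cases the self-dual codes are exactly the $m$-dimensional totally isotropic subspaces, and I would count these by induction on $m$: fixing an isotropic line $L$ and passing to $L^{\perp}/L$ replaces the space by one of the same kind with $m$ lowered by one, so pairing each maximal isotropic with the lines it contains and balancing against the number of isotropic lines in the whole space gives a first-order recurrence whose solution is $\prod_{i=0}^{m-1}(q^{i}+1)$ for a $+$-type orthogonal space over $\FF_q$ and $\prod_{i=1}^{m}(q^{2i-1}+1)$ for a Hermitian space over $\FF_{q^2}$. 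Specializing $q=3$, resp.\ $q=2$, and re-indexing yields $2\prod_{i=1}^{n/2-1}(3^{i}+1)$ and $\prod_{i=0}^{n/2-1}(2^{2i+1}+1)$, as claimed. (Equivalently, one may invoke Witt's extension theorem: the orthogonal group $O_n^{+}(\FF_3)$, resp.\ the unitary group $U_n(\FF_4)$, acts transitively on these subspaces, so the count is the index of a maximal parabolic.) The only delicate points are bookkeeping ones: certifying the $+$-type of the $\FF_3$ form precisely when $n\equiv0\pmod4$ and extracting the leading factor $2=q^{0}+1$ correctly, and remembering that over $\FF_4$ one must work with the Hermitian form rather than a quadratic form because the characteristic is $2$.
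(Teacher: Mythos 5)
This theorem is not proved in the paper at all: it is quoted as a known mass formula with citations to Pless (1968) and MacWilliams--Odlyzko--Sloane--Ward (1978), so there is no internal proof to compare against. Your sketch is a correct outline of the classical argument that those sources rest on, and the key reductions are right: over $\FF_3$, $u\cdot u=\wt(u)\bmod 3$, so every Euclidean self-dual ternary code is automatically Type~$\III$, and the discriminant/Witt-index computation (the form $\sum x_i^2$ of dimension $n$ is of plus type exactly when $(-1)^{n/2}$ is a square in $\FF_3$) correctly recovers $n\equiv 0\pmod 4$; over $\FF_4$ with the Hermitian form, $u\cdot u=\wt(u)\bmod 2$, so Hermitian self-dual codes are automatically Type~$\IV$. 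The counts of maximal totally isotropic subspaces, $\prod_{i=0}^{m-1}(q^i+1)$ for a hyperbolic orthogonal space of dimension $2m$ and $\prod_{i=1}^{m}(q^{2i-1}+1)$ for a Hermitian space of dimension $2m$ over $\FF_{q^2}$, specialize at $q=3$ and $q=2$ to exactly the displayed formulas, including the leading factor $2=3^0+1$. The line/quotient recursion you describe (pair each maximal isotropic with the isotropic lines it contains and compare with the count of isotropic points, using that maximal isotropics through $L$ biject with those of $L^{\perp}/L$), or equivalently Witt transitivity plus a parabolic-index computation, is essentially how the cited references obtain these formulas (Pless phrases it as recursively extending self-orthogonal codes, which is the same induction). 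So your proposal is correct in substance and matches the standard route; the only caveat is that it is a sketch, with the bijection $\{$maximal isotropics containing $L\}\leftrightarrow\{$maximal isotropics of $L^{\perp}/L\}$ and the isotropic-point counts asserted rather than verified.
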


Let 
$C^\prime \subseteq \FF_3^n$ 
be a self-orthogonal code of dimension $k$. 
We denote by $N_{n,k}^{\III}$ 
the number of Type~$\III$ codes over $\FF_3$ 
of length $n$ containing $C^\prime$. 
Then from~\cite{BT2019} we have
\[
	N_{n,k}^{\III} = 2\prod_{i=1}^{n/2-k-1} (3^i + 1).
\]
For $k = 1$, 
we get from~\cite{PP1973} 
the number of Type~$\III$ codes over $\FF_3$ of length~$n$ 
containing a self-orthogonal vector of $\FF_3^n$.

Now if 
$C^\prime \subseteq \FF_4^n$ 
is a self-orthogonal code having dimension $k$, 
then the number of Type~$\IV$ codes over $\FF_4$ of length $n$ 
containing $C^\prime$, 
denoted by $N_{n,k}^{\IV}$, 
is given in~\cite{CPS1979} as follows: 
\[
	N_{n,k}^{\IV} 
	= 
	\prod_{i=0}^{n/2-k-1} (2^{2i+1}+1).
\]
In particular, for $k = 1$ we get the number from~\cite{MOSW1978}.

The following theorem is a Type~$\III$ analogue of 
Theorem~\ref{ThDeltaOne} and Theorem~\ref{ThDeltaTwo}.

\begin{thm}\label{thm:III}
	Let $C$ be a Type~$\III$ code over $\FF_3$ of length $n \equiv 0 \pmod 4$. 
	Then we have
	\begin{itemize}
		\item [(i)] 
		$\Delta_{\III}(C) 
		= 
		4-\dfrac{4}{3^{n/2-1}+1} \approx 4$,
		\item [(ii)] 
		$\Delta_{\III}^{2}(C) 
		= 
		\dfrac{40 (3^{n/2})^2}{(3^{n/2}+3)(3^{n/2}+9)} \approx 40$.
	\end{itemize}
\end{thm}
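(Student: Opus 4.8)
The plan is to mimic the strategy that yields Theorem~\ref{ThDeltaOne} and Theorem~\ref{ThDeltaTwo} in~\cite{Y1991}, but using the Type~$\III$ mass formula and the containment count $N_{n,k}^{\III} = 2\prod_{i=1}^{n/2-k-1}(3^i+1)$. For part~(i), I would start from the definition
\[
	\Delta_{\III}(C)
	=
	\dfrac{1}{|\mathcal{J}_n|}\sum_{D \in \mathcal{J}_n} |C \cap D|,
\]
where $\mathcal{J}_n$ is the set of Type~$\III$ codes of length $n$, and rewrite the right-hand side by counting incidences: $\sum_{D} |C\cap D| = \sum_{v \in C} \#\{D \in \mathcal{J}_n \mid v \in D\}$. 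Since $C$ is Type~$\III$, every $v \in C$ is self-orthogonal, so the inner count is $N_{n,0}^{\III} = |\mathcal{J}_n|$ when $v = \0$ and $N_{n,1}^{\III} = 2\prod_{i=1}^{n/2-2}(3^i+1)$ when $v \neq \0$ (note that every nonzero $v$ in a Type~$\III$ code spans a $1$-dimensional self-orthogonal code, and the count $N_{n,1}^{\III}$ depends only on $n$, not on $v$). Since $|C| = 3^{n/2}$, this gives
\[
	\Delta_{\III}(C)
	=
	1 + (3^{n/2}-1)\cdot\dfrac{N_{n,1}^{\III}}{N_{n,0}^{\III}}
	=
	1 + \dfrac{3^{n/2}-1}{3^{n/2-1}+1},
\]
and a short algebraic simplification turns this into $4 - 4/(3^{n/2-1}+1)$.

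For part~(ii), the idea is the same but one level up: expand
\[
	\Delta_{\III}^{2}(C)
	=
	\dfrac{1}{|\mathcal{J}_n|}\sum_{D \in \mathcal{J}_n}|C\cap D|^2
	=
	\dfrac{1}{|\mathcal{J}_n|}\sum_{(u,v)\in C\times C}\#\{D \in \mathcal{J}_n \mid u,v \in D\}.
\]
The inner count depends on the dimension $k$ of the self-orthogonal code $\langle u,v\rangle$ spanned by the pair, and equals $N_{n,k}^{\III}$. So I would partition $C \times C$ according to $k \in \{0,1,2\}$: the pair $(\0,\0)$ contributes $k=0$; pairs spanning a $1$-dimensional space contribute $k=1$; and "generic" pairs contribute $k=2$. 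The number of pairs with $\dim\langle u,v\rangle = 1$ is $3(3^{n/2}-1)$ (choose the line, then the $3^2-3$ ordered pairs that span it plus the $(v,v)$, $v\neq\0$ cases — more carefully, the ordered pairs $(u,v)$ from a fixed line $L$ with $\langle u,v\rangle = L$ number $|L|^2 - 1 = 8$, over $(3^{n/2}-1)/2$ lines, giving $4(3^{n/2}-1)$), and the number with $k=2$ is then $3^{n}$ minus the lower-dimensional counts. Substituting $N_{n,2}^{\III} = 2\prod_{i=1}^{n/2-3}(3^i+1)$ and using $N_{n,1}^{\III}/N_{n,0}^{\III} = 1/(3^{n/2-1}+1)$ and $N_{n,2}^{\III}/N_{n,0}^{\III} = 1/\bigl((3^{n/2-1}+1)(3^{n/2-2}+1)\bigr)$, I would collect terms and simplify; the claimed closed form $40(3^{n/2})^2/\bigl((3^{n/2}+3)(3^{n/2}+9)\bigr)$ should drop out after factoring $3^{n/2-1}+1 = (3^{n/2}+3)/3$ and $3^{n/2-2}+1 = (3^{n/2}+9)/9$.

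The main obstacle I anticipate is the combinatorial bookkeeping in part~(ii): correctly enumerating the ordered pairs $(u,v) \in C\times C$ by $\dim\langle u,v\rangle$ (being careful about the $\FF_3$ scalars, since $|L| = 3$ for a line $L$, and about the fact that $u$ and $v$ need not be independent even when both are nonzero), and then algebraically collapsing the resulting three-term expression into the stated rational function. A secondary point requiring care is justifying that $N_{n,k}^{\III}$ genuinely depends only on $n$ and $k$ and not on the particular self-orthogonal code — this is exactly the content of the cited formula from~\cite{BT2019}, valid because all $k$-dimensional self-orthogonal codes in $\FF_3^n$ lie in a single orbit under the relevant group, so it can be invoked directly. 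Once the counts are in hand, everything else is routine simplification and the asymptotic statements $\approx 4$ and $\approx 40$ follow by letting $n \to \infty$.
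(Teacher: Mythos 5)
Your proposal is correct and follows essentially the same route as the paper: counting incidences $(u,D)$ and $(u,v,D)$, invoking the mass formula together with $N_{n,1}^{\III}$ and $N_{n,2}^{\III}$ from the cited references, partitioning $C\times C$ by $\dim\langle u,v\rangle$ into the counts $1$, $4(|C|-1)$, $(|C|-1)(|C|-3)$, and simplifying via $3^{n/2-1}+1=(3^{n/2}+3)/3$ and $3^{n/2-2}+1=(3^{n/2}+9)/9$. Your initial figure $3(3^{n/2}-1)$ for the one-dimensional pairs is a slip, but you immediately correct it to the right count $4(3^{n/2}-1)$, which is what the paper uses.
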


\begin{proof}
	(i) Let $C \in \III_n$. Then
	\begin{align*}
		\sum_{D \in \III_n} |C \cap D| 
		& = 
		\# \{(u,D) \in C \times \III_n \mid u \in D\} \\
		& = 
		\sum_{u \in C} 
		\#\{D\in \III_n \mid u \in D\} \\
		& = 
		\left( 
		\sum_{u=0} 
		+ 
		\sum_{u \in C \setminus \{0\}} 
		\right)
		\#\{D\in {\III}_n \mid u \in D\} \\
		& = 
		|{\III}_n| + (|C|-1) N_{n,1}^{\III}.
	\end{align*}
	Since 
	$|{\III}_n| = 2\prod_{i=1}^{n/2-1} (3^i + 1)$, 
	therefore we can write
	\begin{align*}
		\Delta_{\III}(C)
		& = 1 + (|C|-1) \dfrac{N_{n,1}^{\III}}{|{\III}_n|} \\
		& = 1 + \dfrac{3^{n/2}-1}{3^{n/2-1}+1} \\
		& = \dfrac{3^{n/2-1}+3^{n/2}}{3^{n/2-1}+1} \\
		& = \dfrac{3^{n/2-1}+3.3^{n/2-1}}{3^{n/2-1}+1} \\
		& = \dfrac{4.3^{n/2-1}}{3^{n/2-1}+1} \\
		& = 4 - \dfrac{4}{3^{n/2-1}+1}.
	\end{align*}
	This completes the proof of (i).
	
	(ii) Similarly as (i) we can write
	\begin{align*}
		\sum_{D \in \III_n} {|C \cap D|}^2 
		& = 
		\# \{(u,v,D) \in C \times C \times \III_n \mid u,v \in D\} \\
		& = 
		\sum_{u,v \in C} 
		\#\{D\in \III_n \mid \langle u,v \rangle \subseteq D\} \\
		& = 
		\left(
		\sum_{u,v=0} 
		+ 
		\sum_{\dim\langle u,v \rangle = 1} 
		+ 
		\sum_{\dim\langle u,v \rangle = 2}  
		\right)
		\#\{D\in \III_n \mid \langle u,v \rangle \subseteq D\} \\
		& = 
		|{\III}_n| 
		+ 
		4(|C|-1) N_{n,1}^{\III} 
		+ 
		(|C|-1)(|C|-3) N_{n,2}^{\III}.
	\end{align*}
	Since 
	$|{\III}_n| = 2\prod_{i=1}^{n/2-1} (3^i + 1)$, 
	therefore we can write
	\begin{align*}
	\Delta_{\III}^2(C)
		& = 
		1 
		+ 
		4(|C|-1) 
		\dfrac{N_{n,1}^{\III}}{|{\III}_n|} 
		+ 
		(|C|-1)(|C|-3) 
		\dfrac{N_{n,2}^{\III}}{|{\III}_n|} \\
		& = 
		1 
		+ 
		\dfrac{4(3^{n/2}-1)}{3^{n/2-1}+1} 
		+ 
		\dfrac{(3^{n/2}-1)(3^{n/2}-3)}{(3^{n/2-2}+1)(3^{n/2-1}+1)} \\
		& = 
		1 
		+ 
		\dfrac{12(3^{n/2}-1)}{3^{n/2}+3} 
		+ 
		\dfrac{27(3^{n/2}-1)(3^{n/2}-3)}{(3^{n/2}+9)(3^{n/2}+3)} \\
		& = 
		\dfrac{40 (3^{n/2})^2}{(3^{n/2}+3)(3^{n/2}+9)}.
	\end{align*}
	This completes the proof of (ii).
\end{proof}

We close this paper with
the following Type~$\IV$ analogue of 
Theorem~\ref{ThDeltaOne} and Theorem~\ref{ThDeltaTwo}.

\begin{thm}\label{thm:IV}
	Let $C$ be a Type~$\IV$ code over $\FF_4$ of length $n \equiv 0 \pmod 2$. 
	Then we have
	\begin{itemize}
		\item [(i)]  $\Delta_{\IV}(C) = 3 -\dfrac{3}{2^{2(n/2)-1}+1} \approx 3$,
		\item [(ii)]  $\Delta_{\IV}^{2}(C) = \dfrac{27(2^{2(n/2)})^2}{(2^{2(n/2)}+2)(2^{2(n/2)}+8)} \approx 27$.
	\end{itemize}
\end{thm}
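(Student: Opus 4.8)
The plan is to mimic exactly the structure of the proof of Theorem~\ref{thm:III}, replacing the $\FF_3$ mass formulas by their $\FF_4$ counterparts and the number $q=3$ by $q=4$. The key point is that every step in the Type~$\III$ argument — counting pairs $(u,D)$ and triples $(u,v,D)$ with prescribed containment, then splitting according to $\dim\la u,v\ra \in \{0,1,2\}$ — goes through verbatim over $\FF_4$, because what is used there is not any special property of $\FF_3$ but only: (a) for a self-dual code $C$ over a field, $|C| = q^{n/2}$; (b) every codeword of a self-dual code is self-orthogonal, so $\la u\ra$ (resp.\ $\la u,v\ra$) is a self-orthogonal subspace; and (c) the number $N_{n,k}$ of self-dual codes of the given type containing a fixed self-orthogonal subspace of dimension $k$ depends only on $n$ and $k$. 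For Type~$\IV$ codes over $\FF_4$ all three hold, with $|C| = 4^{n/2} = 2^{2(n/2)}$, $|\IV_n| = \prod_{i=0}^{n/2-1}(2^{2i+1}+1)$, and $N_{n,k}^{\IV} = \prod_{i=0}^{n/2-k-1}(2^{2i+1}+1)$ as recorded just above the statement.

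For part (i), I would write
\begin{align*}
	\sum_{D \in \IV_n} |C \cap D|
	& =
	\#\{(u,D) \in C \times \IV_n \mid u \in D\}
	=
	|\IV_n| + (|C|-1)N_{n,1}^{\IV},
\end{align*}
dividing the sum over $u\in C$ into $u=0$ (contributing $|\IV_n|$) and $u\neq 0$ (each contributing $N_{n,1}^{\IV}$, since such $u$ is self-orthogonal). Dividing by $|\IV_n|$ and using $|\IV_n|/N_{n,1}^{\IV} = 2^{2\cdot 0+1}+1 = 3$ together with $|C| = 2^{2(n/2)}$ gives
\[
	\Delta_{\IV}(C) = 1 + \frac{2^{2(n/2)}-1}{3}\cdot\frac{N_{n,1}^{\IV}}{|\IV_n|}\cdot 3^{-1}\cdots
\]
wait — more cleanly, $\Delta_{\IV}(C) = 1 + (|C|-1)N_{n,1}^{\IV}/|\IV_n| = 1 + (2^{2(n/2)}-1)/(2^{2(n/2)-1}+1)$, and a short algebraic simplification (clearing the denominator, using $2^{2(n/2)} = 2\cdot 2^{2(n/2)-1}$, exactly paralleling the Type~$\III$ computation with $4$ replaced by $3$) yields $3 - 3/(2^{2(n/2)-1}+1)$.

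For part (ii), I would expand
\begin{align*}
	\sum_{D \in \IV_n} |C \cap D|^2
	& =
	\#\{(u,v,D) \in C \times C \times \IV_n \mid u,v \in D\} \\
	& =
	|\IV_n| + 3(|C|-1)N_{n,1}^{\IV} + (|C|-1)(|C|-4)N_{n,2}^{\IV},
\end{align*}
splitting the sum over $(u,v)\in C^2$ according to $\dim\la u,v\ra$: the term $u=v=0$ gives $|\IV_n|$; the locus $\dim\la u,v\ra = 1$ consists of the pairs lying in a common $1$-dimensional self-orthogonal subspace, of which there are $(|C|-1)\cdot\#(\text{nonzero scalar multiples pattern})$ — here the combinatorial coefficient becomes $q-1 = 3$ times a line count, whereas over $\FF_3$ it was $2\cdot 2 = 4$; and the locus $\dim\la u,v\ra = 2$ gives the coefficient $(|C|-1)(|C|-q) = (|C|-1)(|C|-4)$. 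The main obstacle — and the only place genuine care is needed — is pinning down these two combinatorial coefficients over $\FF_4$ correctly: over $\FF_3$ a $1$-dimensional space has $2$ nonzero vectors giving $4$ ordered self-orthogonal pairs spanning a line and the $2$-dimensional count removed $|C|-3$ per line, while over $\FF_4$ a line has $3$ nonzero vectors, so one must recount how many ordered pairs $(u,v)$ with $v\neq 0$ and $u \in \la v\ra$ there are (namely $3(|C|-1)$ after accounting for $u=0$ appropriately) and how many ordered pairs span a fixed $2$-dimensional subspace. Once these coefficients are fixed, substituting $N_{n,1}^{\IV}/|\IV_n| = 1/(2^{2(n/2)-1}+1)$ and $N_{n,2}^{\IV}/|\IV_n| = 1/\big((2^{2(n/2)-3}+1)(2^{2(n/2)-1}+1)\big)$, clearing denominators and simplifying (exactly as in the displayed Type~$\III$ computation, with $12, 27, 40$ replaced by the Type~$\IV$ analogues) yields $27(2^{2(n/2)})^2/\big((2^{2(n/2)}+2)(2^{2(n/2)}+8)\big)$, completing the proof.
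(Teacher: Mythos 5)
Your overall strategy --- transporting the Type~$\III$ argument verbatim, with the Type~$\IV$ mass formulas and $q=4$ --- is exactly what the paper does, and your part (i) is correct and matches the paper's proof. The problem is in part (ii): your coefficient for the stratum $\dim\la u,v\ra = 1$ is wrong. The ordered pairs $(u,v)\in C\times C$ spanning a fixed line $L\subseteq C$ are all of $L\times L$ except $(0,0)$, i.e.\ $|L|^2-1=15$ pairs over $\FF_4$, and $C$ contains $(|C|-1)/3$ lines, so this stratum has $5(|C|-1)$ elements: $(|C|-1)$ pairs with $u=0,\ v\neq 0$, another $(|C|-1)$ with $u\neq 0,\ v=0$, and $3(|C|-1)$ with both nonzero and proportional. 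You counted only the last group, which is why you wrote $3(|C|-1)$; note also that your reading of the $\FF_3$ case as ``$2\cdot 2=4$'' misinterprets the paper's $4(|C|-1)$, which is $(q+1)(|C|-1)$ with $q=3$, not $(q-1)^2$.

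This error is not cosmetic. With your coefficients the three cases do not partition $C\times C$, since $1+3(|C|-1)+(|C|-1)(|C|-4)\neq |C|^2$, and substituting $3(|C|-1)N_{n,1}^{\IV}$ into the final computation gives numerator $23Q^2-28Q+32$ (writing $Q=2^{2(n/2)}=|C|$) instead of $27Q^2$, so the claimed identity $\Delta_{\IV}^{2}(C)=27Q^2/\bigl((Q+2)(Q+8)\bigr)$ would not be recovered. The paper's proof uses $|{\IV}_n|+5(|C|-1)N_{n,1}^{\IV}+(|C|-1)(|C|-4)N_{n,2}^{\IV}$, and with the coefficient $5$ the algebra closes up exactly. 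You did flag this coefficient as the one point needing care, but the value you proposed is incorrect, so as written the proof of (ii) does not go through; replacing $3(|C|-1)$ by $5(|C|-1)$ repairs it and then your argument coincides with the paper's.
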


\begin{proof}
	(i) Let $C \in \IV_n$. Then
	\begin{align*}
		\sum_{D \in \IV_n} |C \cap D| 
		& = 
		\# \{(u,D) \in C \times \IV_n \mid u \in D\} \\
		& = 
		\sum_{u \in C} 
		\#\{D\in \IV_n \mid u \in D\} \\
		& = 
		\left(
		\sum_{u=0} 
		+ 
		\sum_{u \in C\setminus \{0\}}
		\right)
		\#\{D\in \IV_n \mid u \in D\} \\
		& = 
		|\IV_n| + (|C|-1) N_{n,1}^{\IV}.
	\end{align*}
	Since 
	$|{\IV}_n| = \prod_{i=0}^{n/2-1} (2^{2i+1}+1)$, 
	therefore,
	\begin{align*}
		\Delta_{\IV}(C)
		& = 
		1 
		+ 
		(|C|-1) 
		\dfrac{N_{n,1}^{\IV}}{|\IV_n|} \\
		& = 
		1 
		+ 
		\dfrac{2^{2(n/2)}-1}{2^{2(n/2)-1}+1} \\
		& = 
		\dfrac{2^{2(n/2)-1}+2^{2(n/2)}}{2^{2(n/2)-1}+1} \\
		& = 
		\dfrac{3.2^{2(n/2)-1}}{2^{2(n/2)-1}+1} \\
		& = 
		3 - \dfrac{3}{2^{2(n/2)-1}+1}.
	\end{align*}
	This completes the proof of (i).
	 
	(ii) Similarly as (i) we can write
	\begin{align*}
		\sum_{D \in \IV_n} {|C \cap D|}^2 
		& = 
		\# \{(u,v,D) \in C \times C \times \IV_n \mid u,v \in D\} \\
		& = 
		\sum_{u,v \in C} 
		\#\{D\in \IV_n \mid \langle u,v \rangle \subseteq D\} \\
		& = 
		\left(
		\sum_{u,v=0} 
		+ 
		\sum_{\dim\langle u,v \rangle = 1} 
		+ 
		\sum_{\dim\langle u,v \rangle = 2}  
		\right)
		\#\{D\in \IV_n \mid \langle u,v \rangle \subseteq D\} \\
		& = 
		|{\IV}_n| 
		+ 
		5(|C|-1) N_{n,1}^{\IV} 
		+ 
		(|C|-1)(|C|-4) N_{n,2}^{\IV}.
	\end{align*}
	Since 
	$|{\IV}_n| = \prod_{i=0}^{n/2-1} (2^{2i+1}+1)$, 
	therefore,
	\begin{align*}
		\Delta_{\IV}^{2}(C)
		& = 
		1 
		+ 
		5(|C|-1)
		\dfrac{N_{n,1}^{\IV}}{|\IV_n|} 
		+ 
		(|C|-1)(|C|-4)
		\dfrac{N_{n,2}^{\IV}}{|\IV_n|} \\
		& = 
		1 
		+ 
		5\dfrac{2^{2(n/2)}-1}{2^{2(n/2)-1}+1} 
		+ 
		\dfrac{(2^{2(n/2)}-1)(2^{2(n/2)}-4)}{(2^{2(n/2)-3}+1)(2^{2(n/2)-1}+1)} \\
		& = 
		1 
		+ 
		10\dfrac{2^{2(n/2)}-1}{2^{2(n/2)}+2} 
		+ 
		16\dfrac{(2^{2(n/2)}-1)(2^{2(n/2)}-4)}{(2^{2(n/2)}+8)(2^{2(n/2)}+2)} \\
		& = 
		\dfrac{27{(2^{2(n/2)})}^2}{(2^{2(n/2)}+8)(2^{2(n/2)}+2)}.
	\end{align*}
	This completes the proof of (ii).
\end{proof}

\section*{Acknowledgements}
The authors thank Manabu Oura for helpful discussions. 
The authors would also like to thank the 
anonymous reviewers for their beneficial comments 
on an earlier version of the manuscript.
The second named author is supported by JSPS KAKENHI (18K03217).

\end{document}